\newcommand{\bsk}{{\boldsymbol{k}}}
\newcommand{\bss}{{\boldsymbol{s}}}
\newcommand{\bsw}{{\boldsymbol{w}}}
\newcommand{\bslambda}{{\boldsymbol{\lambda}}}
\newcommand{\bsGamma}{{\boldsymbol{\Gamma}}}
\newcommand{\bbK}{{\mathbb{K}}}
\newcommand{\bbN}{{\mathbb{N}}}
\newcommand{\N}{{\mathbb{N}}} % natural numbers {1, 2, ...}
\newcommand{\R}{{\mathbb{R}}} % reals
\newcommand{\NN}{{\mathbb{N}}} % natural numbers {1, 2, ...}
\DeclareSymbolFont{bbold}{U}{bbold}{m}{n}
\DeclareSymbolFontAlphabet{\mathbbold}{bbold}
\newcommand{\calA}{{\mathcal{A}}}
\newcommand{\calB}{{\mathcal{B}}}
\newcommand{\calC}{{\mathcal{C}}}
\newcommand{\calF}{{\mathcal{F}}}
\newcommand{\calG}{{\mathcal{G}}}
\newcommand{\calJ}{{\mathcal{J}}}
\newcommand{\calK}{{\mathcal{K}}}
\newcommand{\calO}{{\mathcal{O}}}
\newcommand{\calS}{{\mathcal{S}}}
\newcommand{\calW}{{\mathcal{W}}}
\newcommand{\fraku}{{\mathfrak{u}}}
\providecommand{\argmin}{\operatorname*{argmin}}
\algnewcommand\algorithmicparam{\textbf{Parameters:}}
\algnewcommand\PARAM{\item[\algorithmicparam]}
\algnewcommand\algorithmicinput{\textbf{Input:}}
\algnewcommand\INPUT{\item[\algorithmicinput]}
\algnewcommand\RETURN{\State \textbf{Return }}
\newtheorem{theorem}{Theorem}
\newtheorem{lemma}{Lemma}
\newtheorem{corollary}{Corollary}
\newtheorem{proposition}{Proposition}
\newtheorem{remark}{Remark}
\newenvironment{proof}{\begin{trivlist}
\item[\hskip\labelsep{\it Proof.}]}{$\hfill\Box$\end{trivlist}}
\DeclareMathOperator{\SOL}{SOL}
\DeclareMathOperator{\APP}{APP}
\DeclareMathOperator{\ALG}{ALG}
\DeclareMathOperator{\ERR}{ERR}
\DeclareMathOperator{\card}{card}
\newcommand{\dataN}{\bigl(\hf(\bsk_i)\bigr)_{i=1}^n}
\newcommand{\dataNj}{\bigl(\hf(\bsk_i)\bigr)_{i=1}^{n_j}}
\newcommand{\dataNjd}{\bigl(\hf(\bsk_i)\bigr)_{i=1}^{n_{j^\dagger}}}
\newcommand{\ERRN}{\ERR\bigl(\dataN,n\bigr)}
\newcommand{\ERRNj}{\ERR\bigl(\dataNj,n_j\bigr)}
\newcommand{\ERRNjd}{\ERR\bigl(\dataNjd,n_{j^\dagger}\bigr)}
\DeclareMathOperator{\COST}{COST}
\DeclareMathOperator{\COMP}{COMP}
\newcommand{\hf}{\widehat{f}}
\newcommand{\hg}{\widehat{g}}
\newcommand{\tu}{\Tilde{u}}
\newcommand{\tcalK}{\widetilde{\calK}}
\newcommand{\lo}{\textup{lo}}
\newcommand{\up}{\textup{up}}
\newcommand{\inc}{\textup{in}}
\newcommand{\out}{\textup{out}}
\newcommand{\E}{\textup{e}}
\newcommand{\bigabs}[1]{\ensuremath{\bigl \lvert #1 \bigr \rvert}}
\newcommand{\biggabs}[1]{\ensuremath{\biggl \lvert #1 \biggr \rvert}}
\newcommand{\norm}[2][{}]{\ensuremath{\left \lVert #2 \right \rVert}_{#1}}
\newcommand{\bignorm}[2][{}]{\ensuremath{\bigl \lVert #2 \bigr \rVert}_{#1}}
\newcommand{\biggnorm}[2][{}]{\ensuremath{\biggl \lVert #2 \biggr \rVert}_{#1}}
\newcommand{\tLambda}{\widetilde{\Lambda}}
\newcommand{\bcalK}{\bar{\calK}}
\newcommand{\fapp}{f_{\text{app}}}
\begin{document}

 \title{Adaptive Approximation for Multivariate Linear Problems\\ with Inputs Lying in a Cone}
 \author{Yuhan Ding, Fred J. Hickernell, Peter Kritzer, Simon Mak}
 \maketitle
\begin{abstract}
\noindent We study adaptive approximation algorithms for general multivariate linear 
  problems where the sets of input functions are non-convex cones. While it is known that adaptive algorithms perform 
  essentially no better than non-adaptive algorithms for convex input sets, the 
  situation may be different for non-convex sets. A typical example
  considered here is function approximation based on series expansions. Given 
  an error tolerance, we use series coefficients of the input to construct an 
  approximate solution such that the error does not exceed this tolerance. We study the situation where we can bound the norm of the input based on a pilot sample, and the situation where we keep track of the decay rate of the series coefficients of the input. Moreover, 
  we consider situations where it makes sense to infer coordinate and smoothness importance. Besides performing an error analysis, we also study the information cost of our algorithms and the computational complexity 
  of our problems, and we identify conditions under which we can avoid a curse of 
  dimensionality.
\end{abstract}

%%%%%%%%%%%%%%%%%%%%%%%%%%%%
\section{Introduction} 
%%%%%%%%%%%%%%%%%%%%%%%%%%%%

In many situations, adaptive algorithms can be rigorously shown to perform \emph{essentially no better} than non-adaptive algorithms.  Yet, in practice adaptive algorithms are appreciated because they relieve the user from stipulating the computational effort required to achieve the desired accuracy.  The key to resolving this seeming contradiction is to construct a theory based on assumptions that favor adaptive algorithms. We do that here.

Adaptive algorithms infer the necessary computational effort based on the function data sampled.  Adaptive algorithms may perform better than non-adaptive algorithms if the set of input functions is \emph{non-convex}. We construct adaptive algorithms for general multivariate linear problems where the input functions lie in non-convex cones.  Our algorithms use a finite number of series coefficients of the input function to construct an approximate solution that satisfies an absolute error tolerance.  We show our algorithms to be essentially optimal.  We derive conditions under which the problem is tractable, i.e., the information cost of constructing the approximate solution does not increase exponentially with the dimension of the input function domain.  In the remainder of this section we define the problem and essential notation.  But first, we present a helpful example.

%%%%%%%%%%%%%%%%%%%%%%%%%%%%%%%%%%%%%%%%%%%%%%%%%%%%%%%%%%%%%%%%%%%%%%%%%%%%%%%%%%%%
\subsection{An Illustrative Example}\label{DHKM:secexamp}
%%%%%%%%%%%%%%%%%%%%%%%%%%%%%%%%%%%%%%%%%%%%%%%%%%%%%%%%%%%%%%%%%%%%%%%%%%%%%%%%%%%%

Consider the case of approximating functions defined over $[-1,1]^d$, using a Chebyshev polynomial basis.  The input function is denoted $f$, and the solution is $\SOL(f) = f$.  In this case,
	\begin{align*}
	f &  = \sum_{\bsk \in \bbN_0^d} \widehat{f}(\bsk) u_\bsk =: \SOL(f), 
	\qquad \bsk = (k_1, \ldots, k_d) \in \N_0^d,\\
    u_\bsk & := \prod_{\ell =1}^d \tu_{k_\ell} , 
    \qquad \tu_{k}(x) := \cos( k \cos^{-1}(x)) \quad \forall k \in \N_0.
	\end{align*}
Approximating $f$ well by a finite sum requires knowing which terms in the infinite series for $f$ are more important.  Let $\calF$ denote a Hilbert space of input functions where the norm of $\calF$ is a $\bslambda$-weighted norm of the series coefficients:
\begin{equation*}
    \norm[\calF]{f} := \norm[2]{\left(\frac{\hf(\bsk)}{\lambda_{\bsk}}\right)_{\bsk \in \N_0^d}}, \qquad \text{where } \bslambda = \bigl( \lambda_{\bsk} \bigr)_{\bsk \in \N_0^d}, \quad \lambda_\bsk := \prod_{\substack{\ell =1\\ k_\ell > 0}}^d \frac{w_\ell}{k^r_\ell}, \quad r > 0.
\end{equation*}
The $w_\ell$ are non-negative \emph{coordinate weights}, which embody the assumption that $f$ may depend more strongly on coordinates with larger $w_\ell$ than those with smaller $w_\ell$.  The definition of the $\calF$-norm implies that an input function must have series coefficients that decay quickly enough as the degree of the polynomial increases.  Larger $r$ implies smoother input functions.

The ordering of the weights,
\begin{equation} \label{DHKM:lambda_order}
    \lambda_{\bsk_1} \ge \lambda_{\bsk_2} \ge \cdots >0,
\end{equation}
implies an ordering of the wavenumbers, $\bsk$.  It is natural to approximate the solution using the first $n$ series coefficients as follows:
\begin{equation*} %\label{DHKM:Ex_APP_def}
   \APP(f,n) := \sum_{i=1}^n \hf(\bsk_i) u_{\bsk_i} \qquad \forall f \in \calF, \ n \in \N.
\end{equation*}

Here, we assume that it is possible to sample the series coefficients of the input function.  This is a less restrictive assumption than being able to sample any linear functional, but it is more restrictive than only being able to sample function values.  An important future problem is to extend the theory in this chapter to the case where the only function data available are function values.

The error of this approximation in terms of the norm on the output space, $\calG$, can be expressed as
\begin{gather*}
    \norm[\calG]{\SOL(f) - \APP(f,n)} = \norm[2]{\bigl(\hf(\bsk_i) \bigr)_{i=n+1}^{\infty}}, \\
    \text{where}\qquad
    \norm[\calG]{\sum_{\bsk \in \N_0^d} \hg(\bsk) u_{\bsk}} : = \norm[2]{\bigl(\hg(\bsk) \bigr)_{\bsk \in \N_0^d}}.
\end{gather*}
If one has a fixed data budget, $n$, then $\APP(f,n)$ is the best answer.  

However, our goal is an algorithm, $\ALG(f,\varepsilon)$ that satisfies the error criterion
\begin{equation} \label{DHKM:err_crit}
    \norm[\calG]{\SOL(f) - \ALG(f,\varepsilon)} \le \varepsilon \qquad \forall \varepsilon > 0, \ f \in \calC,
\end{equation}
where  $\varepsilon$ is the error tolerance, and $\calC \subset \calF$ is the set of input functions for which $\ALG$ is successful.  This algorithm contains a rule for choosing $n$---depending on $f$ and $\varepsilon$---so that $\ALG(f,\varepsilon) = \APP(f,n)$.  The objectives of this chapter are to 
\begin{itemize}
    \item construct such a rule, 
    \item choose a set $\calC$ of input functions for which the rule is valid,  
    \item characterize the information cost of $\ALG$, 
    \item determine whether $\ALG$ has optimal information cost, and 
    \item understand the dependence of this cost on the number of input variables, $d$, as well as the error tolerance, $\varepsilon$.
\end{itemize}
We return to this example in Section \ref{DHKM:revisexamp} to discuss the answers to some of these questions.  We perform some numerical experiments for this example in Section \ref{DHKM:numexamp_sec}.

%%%%%%%%%%%%%%%%%%%%%%%%%%%%%%%%%%%%%%%%%%%%%%%%%%%%%%%%%%%%%%%%%%%%%%%%%%%%%%%%%%%%
\subsection{General Linear Problem}
%%%%%%%%%%%%%%%%%%%%%%%%%%%%%%%%%%%%%%%%%%%%%%%%%%%%%%%%%%%%%%%%%%%%%%%%%%%%%%%%%%%%
Now, we define our problem more generally.  A solution operator maps the input function to an output, $\SOL:\calF \to \calG$.  As in the illustrative example above, the Banach spaces of inputs and outputs are defined by series expansions:
\begin{gather*}
    \calF := \left \{f = \sum_{\bsk \in \mathbb{K}} \hf(\bsk) u_{\bsk} : \norm[\calF]{f} : = \norm[\rho]{\left( \frac{\hf(\bsk)}{\lambda_{\bsk}} \right)_{\bsk \in \mathbb{K}}} < \infty \right\}, \quad 1 \le \rho \le \infty, \\
    \calG := \left \{g = \sum_{\bsk \in \mathbb{K}} \hg(\bsk) v_{\bsk} : \norm[\calG]{g} : = \norm[\tau]{\bigl(  \hg(\bsk)  \bigr)_{\bsk \in \mathbb{K}}} < \infty \right\}, \quad 1 \le \tau \le \rho.
\end{gather*}
Here, $\{u_{\bsk}\}_{\bsk \in \mathbb{K}}$ is a basis for the input Banach space $\calF$, $\{v_{\bsk}\}_{\bsk \in \mathbb{K}}$ is a basis for the output Banach space $\calG$, $\mathbb{K}$ is a countable index set, and $\bslambda = (\lambda_\bsk)_{\bsk \in \mathbb{K}}$ is the sequence of weights. These bases are defined to match the solution operator:
\begin{equation} \label{DHKM:basis_relate}
    \SOL(u_{\bsk}) = v_{\bsk} \qquad \forall \bsk \in \mathbb{K}.
\end{equation}
The $\lambda_{\bsk}$ represent the importance of the series coefficients of the input function.  The larger $\lambda_{\bsk}$ is, the more important $\hf(\bsk)$ is.

Although this problem formulation is quite general in some aspects, condition \eqref{DHKM:basis_relate} is somewhat restrictive.  In principle, the choice of basis can be made via the singular value decomposition, but in practice, if the norms of $\calF$ and $\calG$ are specified without reference to their respective bases, it may be difficult to identify bases satisfying \eqref{DHKM:basis_relate}.

To facilitate our derivations below, we establish the following lemma via H\"older's inequality:

\begin{lemma} \label{DHKM:Key_Lem}
Let $\calK$ be some proper or improper subset of the index set $\bbK$. Moreover, let $\rho'$ be defined by the relation
\begin{equation*}
    \frac 1\rho + \frac 1 {\rho'} = \frac 1 \tau, \qquad \text{i.e., } \rho' := \frac{\rho \tau}{\rho - \tau},
\end{equation*}
so $\tau \le \rho' \le \infty$.  Let $\Lambda :=  \bignorm[\rho']{\bigl(  \lambda_{\bsk}  \bigr)_{\bsk \in \calK}}$ be the norm of a subset of the weights.  Then the following are true for $f = \sum_{\bsk \in \calK} \hf(\bsk) u_{\bsk}$:
\begin{equation}
\label{DHKM:SOL_ineq}
    \norm[\calG]{\SOL(f)} = \norm[\tau]{\bigl(\hf(\bsk) \bigr)_{\bsk \in \calK}} \le \norm[\calF]{f} \, \Lambda,
    \end{equation}
    \begin{multline}
    \label{DHKM:SOL_tight_ineq}
    \bigabs{\hf(\bsk)} = \begin{cases}
    \displaystyle 
    \frac{R \lambda_{\bsk}^{\rho'/\rho + 1}}{\Lambda^{\rho'/\rho}}, & \forall \bsk \in \calK, \quad  \text{if}\ \rho'<\infty, \\
    R \Lambda \delta_{\bsk,\bsk^*}, & \forall \bsk \in \calK, \ \bsk^* \in \calK \text{ satisfies } \lambda_{\bsk^*} = \Lambda, \quad\text{if}\ \rho' = \infty,
    \end{cases}
   \\ 
    \implies  \ \ \norm[\calF]{f} = R \ \mbox{and} \ \norm[\calG]{\SOL(f)} = R \Lambda.
    \end{multline}
Equality \eqref{DHKM:SOL_tight_ineq} illustrates how inequality \eqref{DHKM:SOL_ineq} may be made tight.
\end{lemma}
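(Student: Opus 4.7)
The plan is to recognize both parts of the lemma as the standard content of Hölder's inequality, only with slightly uncommon conjugate exponents. Before invoking Hölder, I would unpack the output norm: since $\SOL$ is linear with $\SOL(u_\bsk) = v_\bsk$ by \eqref{DHKM:basis_relate}, one has $\SOL(f) = \sum_{\bsk \in \calK} \hf(\bsk) v_\bsk$, so the definition of $\norm[\calG]{\cdot}$ gives $\norm[\calG]{\SOL(f)} = \norm[\tau]{(\hf(\bsk))_{\bsk \in \calK}}$ immediately. This is the left-hand equality of \eqref{DHKM:SOL_ineq}; all the analytic content sits in the inequality that follows.

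For the inequality \eqref{DHKM:SOL_ineq} I would split each coefficient multiplicatively as $\hf(\bsk) = \lambda_\bsk \cdot (\hf(\bsk)/\lambda_\bsk)$ and apply the classical Hölder inequality to the summands $\lambda_\bsk^\tau \cdot \bigabs{\hf(\bsk)/\lambda_\bsk}^\tau$ with conjugate exponents $p = \rho'/\tau$ and $q = \rho/\tau$. These are valid Hölder exponents because the defining relation $1/\rho + 1/\rho' = 1/\tau$ rearranges to $\tau/\rho + \tau/\rho' = 1$; substituting back yields exactly $\Lambda^\tau \norm[\calF]{f}^\tau$ on the right-hand side. The endpoint $\rho' = \infty$, which forces $\rho = \tau$, is handled by the corresponding $\ell^\infty$--$\ell^1$ version of Hölder in the natural way.

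For the equality statement \eqref{DHKM:SOL_tight_ineq} the plan is direct verification by substitution of the prescribed $\hf(\bsk)$. In the generic case $\rho' < \infty$, the exponent manipulations collapse via the identity $(\rho'/\rho + 1)\rho - \rho = \rho'$ (respectively $(\rho'/\rho + 1)\tau = \rho'$)---each a one-line consequence of the conjugacy relation multiplied through---to yield $\norm[\calF]{f}^\rho = R^\rho \Lambda^{-\rho'} \sum_{\bsk} \lambda_\bsk^{\rho'} = R^\rho$ and $\norm[\calG]{\SOL(f)}^\tau = R^\tau \Lambda^{-\rho'\tau/\rho} \sum_{\bsk} \lambda_\bsk^{\rho'} = R^\tau \Lambda^\tau$. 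The case $\rho' = \infty$, in which the prescribed $\hf$ is a single spike at $\bsk^*$, is immediate: $\norm[\calF]{f} = \bigabs{\hf(\bsk^*)}/\lambda_{\bsk^*} = R$ and $\norm[\calG]{\SOL(f)} = \bigabs{\hf(\bsk^*)} = R\Lambda$. I do not expect any real obstacle; this is essentially the equality case of Hölder rewritten in the present notation, and the only point requiring care is the consistent bookkeeping of the exponents $\tau$, $\rho$, $\rho'$, and their ratios.
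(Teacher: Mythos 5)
Your proposal is correct and follows essentially the same route as the paper: the inequality \eqref{DHKM:SOL_ineq} via H\"older's inequality applied to $\lambda_{\bsk}^{\tau}\,\bigabs{\hf(\bsk)/\lambda_{\bsk}}^{\tau}$ with conjugate exponents $\rho'/\tau$ and $\rho/\tau$, and the equality \eqref{DHKM:SOL_tight_ineq} by direct substitution using the exponent identities coming from $1/\rho + 1/\rho' = 1/\tau$. Your explicit treatment of the $\rho'=\infty$ spike case is a small bonus, since the paper only remarks that it follows similarly.
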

\begin{proof}
We give the proof for $\rho' < \infty$.  The proof for $\rho' = \infty$ follows similarly. 
The proof of inequality \eqref{DHKM:SOL_ineq} proceeds by applying H\"older's inequality:  
\begin{align}
    \label{DHKM:SOL_A}
    \norm[\calG]{\SOL(f)}  
    & = \biggnorm[\calG]{\sum_{\bsk \in \calK} \hf(\bsk) v_{\bsk}} = \norm[\tau]{\bigl(  \hf(\bsk)  \bigr)_{\bsk \in \calK}}  = \left [\sum_{\bsk \in \calK}  \left\lvert\frac{\hf(\bsk)}{\lambda_{\bsk}} \right\rvert^{\tau} \lambda_{\bsk}^{\tau} \right]^{1/\tau} \\
    \nonumber
    & \le \biggnorm[\rho]{\biggl(  \frac{\hf(\bsk)}{\lambda_{\bsk}}  \biggr)_{\bsk \in \calK}} \, \bignorm[\rho']{\bigl(  \lambda_{\bsk}  \bigr)_{\bsk \in \calK}} = \norm[\calF]{f} \, \Lambda \qquad \text{since }\frac 1\rho + \frac 1 {\rho'} = \frac 1 \tau.
\end{align}
Substituting the formula for $\bigabs{\hf(\bsk)}$ in \eqref{DHKM:SOL_tight_ineq} into equation \eqref{DHKM:SOL_A} and applying the relationship between $\rho$, $\rho'$, and $\tau$ yields
\begin{equation*}
       \norm[\calG]{\SOL(f)}  
    =  \frac{R \bignorm[\tau]{\bigl(  \lambda_{\bsk}^{\rho'/\rho + 1}  \bigr)_{\bsk \in \calK}}} {\Lambda^{\rho'/\rho}} 
    = \frac{R \bignorm[\rho']{\bigl(  \lambda_{\bsk}  \bigr)_{\bsk \in \calK}}^{\rho'/\rho + 1}}
    {\Lambda^{\rho'/\rho}} = R \Lambda.
\end{equation*}
Moreover,
\begin{equation*}
    \norm[\calF]{f}  
    = \norm[\rho]{\left( \frac{\hf(\bsk)}{\lambda_{\bsk}} \right)_{\bsk \in \calK}}
    = \frac{R \bignorm[\rho]{\bigl(  \lambda_{\bsk}^{\rho'/\rho}  \bigr)_{\bsk \in \calK}}}{\Lambda^{\rho'/\rho}} 
    = \frac{R \bignorm[\rho']{\bigl(  \lambda_{\bsk}  \bigr)_{\bsk \in \calK}}^{\rho'/\rho}}
    {\Lambda^{\rho'/\rho}} = R.
\end{equation*}
This completes the proof.
\end{proof} \

Taking $\calK = \bbK$ in the lemma above, the norm of the solution operator can be expressed in terms of the norm of $\bslambda$ as follows:
\begin{equation} \label{DHKM:SOLNorm}
    \norm[\calF \to \calG]{\SOL}  = \sup_{\norm[\calF]{f} \le 1} \norm[\calG]{\SOL(f)} = \bignorm[\rho']{\bslambda}.
\end{equation}
We assume throughout this chapter that the weights are chosen to keep this norm is finite, namely,
\begin{equation} \label{DHKM:SOLNormFinite}
    \bignorm[\rho']{\bslambda} < \infty.
\end{equation}

As in Section \ref{DHKM:secexamp}, here in the general case the $\lambda_{\bsk}$ are assumed to have a known order as was specified in \eqref{DHKM:lambda_order}.
We also assume that all $\lambda_{\bsk}$ are  positive to avoid the trivial case where $\SOL(f)$ can be expressed exactly as a finite sum for all $f \in \calF$.

%%%%%%%%%%%%%%%%%%%%%%%%%%%%%%%%%%%%%%%%%
\subsection{An Approximation and an Algorithm}
%%%%%%%%%%%%%%%%%%%%%%%%%%%%%%%%%%%%%%%%%

The optimal approximation  based on $n$ series coefficients of the input function is defined in terms of the series coefficients of the input function corresponding to the largest $\lambda_{\bsk}$ as follows:
\begin{equation} \label{DHKM:APP_def}
    \APP : \calF \times \N_0 \to \calG, \quad  \APP(f,0) = 0, \ \ \APP(f,n) := \sum_{i=1}^n \hf(\bsk_i) v_{\bsk_i} \ \forall n \in \N.
\end{equation}
By the argument leading to \eqref{DHKM:SOL_A} it follows that 
\begin{equation} \label{DHKM:APP_Err_Coef}
    \norm[\calG]{\SOL(f) - \APP(f,n)} = \norm[\tau]{\bigl(\hf(\bsk_i)\bigr)_{i=n+1}^\infty}.
\end{equation}
An upper bound on the approximation error follows from Lemma \ref{DHKM:Key_Lem}:
\begin{equation} \label{DHKM:Refined_APP_err}
    \norm[\calG]{\SOL(f) - \APP(f,n) } \le \norm[\rho]{\left(\frac{\hf(\bsk_i)}{\lambda_{\bsk_i}}\right)_{i=n+1}^\infty}
    \bignorm[\rho']{\bigl(  \lambda_{\bsk_i}  \bigr)_{i = n+1}^{\infty}}.
\end{equation}
This leads to the following theorem.

\begin{theorem} \label{DHKM:APP_optimality_thm} Let $\calB_{R} : = \{ f \in \calF : \norm[\calF]{f} \le R \}$ denote the ball of radius $R$ in the space of input functions.  The error of the approximation defined in \eqref{DHKM:APP_def} is bounded tightly above as 
\begin{equation} \label{DHKM:APP_errorBd}
    \sup_{f \in \calB_R} \norm[\calG]{\SOL(f) - \APP(f,n)}  \le R \, \bignorm[\rho']{\bigl(  \lambda_{\bsk_i}  \bigr)_{i = n+1}^{\infty}}.
\end{equation}
Moreover, the worst case error over $\calB_R$ of $\APP'(\cdot,n)$, for any approximation based on $n$ series coefficients of the input function, can be no smaller.
\end{theorem}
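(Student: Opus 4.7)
My strategy is to establish three things in turn: (a) the upper bound, (b) its attainment by $\APP(\cdot,n)$, and (c) its optimality among all approximations using $n$ series coefficients. For (a), I start from the refined error bound \eqref{DHKM:Refined_APP_err}. The sequence $\bigl(\hf(\bsk_i)/\lambda_{\bsk_i}\bigr)_{i=n+1}^\infty$ is a subsequence of $\bigl(\hf(\bsk)/\lambda_\bsk\bigr)_{\bsk\in\bbK}$, so its $\rho$-norm is bounded above by $\norm[\calF]{f}\le R$. Substituting into \eqref{DHKM:Refined_APP_err} yields the claimed bound $R\,\bignorm[\rho']{(\lambda_{\bsk_i})_{i=n+1}^\infty}$.

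For (b), invoke Lemma \ref{DHKM:Key_Lem} with $\calK=\{\bsk_{n+1},\bsk_{n+2},\dots\}$ to produce an extremizer $f^\dagger\in\calB_R$ whose first $n$ series coefficients all vanish and that satisfies $\norm[\calG]{\SOL(f^\dagger)}=R\,\bignorm[\rho']{(\lambda_{\bsk_i})_{i=n+1}^\infty}$. Since $\APP(f^\dagger,n)=0$ for this input, the approximation error equals $\norm[\calG]{\SOL(f^\dagger)}$, matching the upper bound and proving sharpness.

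For (c), let $\APP'(\cdot,n)$ be any approximation that queries $n$ series coefficients, allowing the queries to be chosen adaptively. Consider running $\APP'$ on $f\equiv 0$: this produces a sequence of queried indices $\calJ=\{\bsj_1,\dots,\bsj_n\}$ and some output $g_0\in\calG$. Apply Lemma \ref{DHKM:Key_Lem} with $\calK=\bbK\setminus\calJ$ to obtain $f^\dagger\in\calB_R$ supported on $\bbK\setminus\calJ$ with $\norm[\calG]{\SOL(f^\dagger)}=R\,\bignorm[\rho']{(\lambda_\bsk)_{\bsk\in\bbK\setminus\calJ}}$. Because the coefficients of $\pm f^\dagger$ on $\calJ$ all vanish, each query during the execution of $\APP'$ on either $+f^\dagger$ or $-f^\dagger$ returns zero at every step, so the adaptive query pattern reproduces $\calJ$ and the output equals $g_0$ for both inputs. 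The triangle inequality then gives
\[
  \max\bigl(\norm[\calG]{\SOL(f^\dagger)-g_0},\,\norm[\calG]{-\SOL(f^\dagger)-g_0}\bigr)\ge\norm[\calG]{\SOL(f^\dagger)},
\]
and the monotone ordering \eqref{DHKM:lambda_order} ensures $\bignorm[\rho']{(\lambda_\bsk)_{\bsk\in\bbK\setminus\calJ}}\ge\bignorm[\rho']{(\lambda_{\bsk_i})_{i=n+1}^\infty}$ for any $n$-element $\calJ$, completing the optimality claim.

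The main obstacle is the optimality step, especially the adaptive case: a priori, $\calJ$ depends on the observed coefficients and could shift unfavorably to the adversary. The resolution is the symmetry trick above, which chooses $f^\dagger$ to produce the same zero data as $f\equiv 0$, forcing $\APP'$ to treat $\pm f^\dagger$ identically and thereby reducing adaptive information to the non-adaptive case handled by Lemma \ref{DHKM:Key_Lem}.
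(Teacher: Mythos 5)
Your proposal is correct and follows essentially the same route as the paper: the upper bound via \eqref{DHKM:Refined_APP_err}, sharpness via the extremal function of Lemma \ref{DHKM:Key_Lem} supported on the tail indices, and optimality via the symmetrized fooling functions $\pm f^\dagger$ that mimic the zero function on the queried set, concluding with the monotone ordering \eqref{DHKM:lambda_order}. Your explicit treatment of the adaptive query pattern merely spells out what the paper leaves implicit in the statement $\APP'(\pm f, n) = \APP'(0,n)$.
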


\begin{proof}
The proof of \eqref{DHKM:APP_errorBd} follows immediately from \eqref{DHKM:Refined_APP_err} and  Lemma \ref{DHKM:Key_Lem}.  The optimality of $\APP$ follows by bounding the error of an arbitrary approximation, $\APP'$, applied to functions that mimic the zero function.

 Let $\APP'(0,n)$ depend on the series coefficients indexed by $\calJ  = \{\bsk'_1, \ldots, \bsk'_n\}$.  Use Lemma \ref{DHKM:Key_Lem} with $\calK = \bbK \setminus \calJ$ to choose $f$ to mimic the zero function, have norm $R$, and have as large a solution as possible, i.e.,
\begin{gather}
\nonumber
    \hf(\bsk'_1) = \cdots = \hf(\bsk'_n) = 0, \qquad \norm[\calF]{f} = R, \\ 
    \norm[\calG]{\SOL(f)} =  R \norm[\rho']{\left( \lambda_{\bsk} \right)_{\bsk \notin \calJ}} \qquad  \text{by \eqref{DHKM:SOL_tight_ineq}}.  \label{DHKM:FoolFun}
\end{gather}
Then $\APP'(\pm f,n) = \APP'(0,n)$ because $f$ mimics the zero function, and
\begin{align*}
\MoveEqLeft{\sup_{f \in \calB_R} \norm[\calG]{\SOL(f) - \APP(f,n)}} \\
& \ge \max_{\pm} \norm[\calG]{\SOL(\pm f) - \APP'(\pm f,n)} =  \max_{\pm} \norm[\calG]{\SOL(\pm f) - \APP'(0,n)} \\
& \ge \frac 12 \left [ \norm[\calG]{\SOL(f) - \APP'(0,n)} 
+ \norm[\calG]{- \SOL(f) - \APP'(0,n)}\right] \\
& \ge \norm[\calG]{\SOL(f)} 
 = R  \norm[\rho']{\left( \lambda_{\bsk} \right)_{\bsk \notin \calJ}} \qquad \text{by \eqref{DHKM:FoolFun}}.
\end{align*}
The ordering of the $\lambda_{\bsk}$ implies that $\norm[\rho']{\left( \lambda_{\bsk} \right)_{\bsk \notin \calJ}}$ for arbitrary $\calJ$ can be no smaller than the case $\calJ = \{\bsk_1, \ldots, \bsk_n\}$.  This completes the proof.
\end{proof} \

While approximation $\APP$ is a key piece of the puzzle, our ultimate goal is an algorithm, $\ALG : \calC \times [0,\infty)$, satisfying the absolute error criterion \eqref{DHKM:err_crit}. The non-adaptive Algorithm \ref{DHKM:BallAlg} satisfies this error criterion for $\calC  = \calB_R$.  

\begin{algorithm}[H]
\caption{Non-Adaptive $\ALG$ for a Ball of Input Functions \label{DHKM:BallAlg}}
	\begin{algorithmic}
	\PARAM the Banach spaces $\calF$ and $\calG$, including the weights $\bslambda$; the ball radius, $R$; $\APP$ satisfying \eqref{DHKM:APP_errorBd}
	\INPUT a black-box function, $f$; an absolute error tolerance, $\varepsilon>0$

    \Ensure Error criterion \eqref{DHKM:err_crit} for $\calC = \calB_{R}$

    \State Choose $n^* =  \min \left \{n \in \N_0 : \bignorm[\rho']{\bigl(  \lambda_{\bsk_i}  \bigr)_{i = n+1}^{\infty}} \le \varepsilon /R \right \}$

    \RETURN $\ALG(f,\varepsilon) = \APP(f,n^*)$
\end{algorithmic}
\end{algorithm}

After defining the information cost of an algorithm and the problem complexity in the next subsection, we demonstrate that this non-adaptive algorithm is optimal when the set of inputs is chosen to be $\calC = \calB_R$. However, typically one cannot bound the norm of the input function a priori, so Algorithm \ref{DHKM:BallAlg} is impractical. 

The key difficulty is that error bound \eqref{DHKM:APP_errorBd} depends on the norm of the input function.  In contrast, we will construct  error bounds for $\APP(f,n)$ that only depend on function data.  These will lead to \emph{adaptive} algorithms $\ALG$ satisfying error criterion \eqref{DHKM:err_crit}.  For such algorithms, the set of allowable input functions, $\calC$, will be a \emph{cone}, not a ball.

Note that algorithms satisfying error criterion \eqref{DHKM:err_crit} cannot exist for $\calC = \calF$. Any algorithm must require a finite sample size, even if it is huge.  Then, there must exist some $f \in \calF$ that looks exactly like the zero function to the algorithm but for which $\norm[\calG]{\SOL(f)}$ is arbitrarily large.  Thus, algorithms satisfying the error criterion  exist only for some strict subset of $\calF$.  Choosing that subset well is both an art and a science.

%%%%%%%%%%%%%%%%%%%%%%%%%%%%
\subsection{Information Cost and Problem Complexity}
%%%%%%%%%%%%%%%%%%%%%%%%%%%%

The information cost of $\ALG(f,\varepsilon)$ is denoted $\COST(\ALG,f,\varepsilon)$ and defined as the number of function data---in our situation, series coefficients---required by $\ALG(f,\varepsilon)$.  For adaptive algorithms this cost varies with the input function $f$.  We also define the information cost of the algorithm in general, recognizing that it will tend to depend on $\norm[\calF]{f}$:
\begin{equation*}
    \COST(\ALG, \calC, \varepsilon,R) : = \max_{f \in \calC \cap \calB_{R}} \COST(\ALG,f,\varepsilon).
\end{equation*}
Note that while the cost depends on $\norm[\calF]{f}$, $\ALG(f,\varepsilon)$ has no knowledge of $f$ beyond the fact that it lies in $\calC$.  It is common for $\COST(\ALG, \calC, \varepsilon,R)$ to be $\calO(\varepsilon^{-p})$, or perhaps asymptotically $c\log(1 + \varepsilon^{-1})$.

Let $\calA(\calC)$ denote the set of all possible algorithms that may be constructed using series coefficients and that \emph{satisfy error criterion \eqref{DHKM:err_crit}}.  We define the \emph{computational complexity} of a problem as the information cost of the best algorithm:
\begin{equation*}
    \COMP(\calA(\calC), \varepsilon,R) := \min_{\ALG \in \calA(\calC)} \COST(\ALG, \calC, \varepsilon,R) .
\end{equation*}
These definitions follow the information-based complexity literature \cite{TraWer98, TraWasWoz88}.
We define an algorithm to be \emph{essentially optimal} if there exist some fixed positive $\omega$, $\varepsilon_{\max}$, and $R_{\min}$ for which
\begin{multline} \label{DHKM:EssentialOpt}
    \COST(\ALG, \calC, \varepsilon,R) \le \COMP(\calA(\calC), \omega \varepsilon,R) \\ \forall \, \varepsilon \in (0, \varepsilon_{\max}], \ R \in [R_{\min}, \infty).
\end{multline}
If the complexity of the problem is $\calO(\varepsilon^{-p})$, the cost of an essentially optimal algorithm is also $\calO(\varepsilon^{-p})$. If the complexity of the problem is asymptotically $c \log(1 + \varepsilon^{-1})$, then  the cost of an essentially optimal algorithm is also asymptotically $c \log(1 + \varepsilon^{-1})$. 
We will show that our adaptive algorithms presented in Sections \ref{DHKM:pilot_sec} and \ref{DHKM:tracking_sec}  are essentially optimal.

\begin{theorem}\label{DHKM:thm_cost_non_adapt}
The non-adaptive Algorithm \ref{DHKM:BallAlg} has an information cost for the set of input functions $\calC = \calB_R$ that is given by
\[
\COST(\ALG, \calB_R, \varepsilon,R') = \min \left \{n \in \N_0 : \bignorm[\rho']{\bigl(  \lambda_{\bsk_i}  \bigr)_{i = n+1}^{\infty}} \le \varepsilon /R \right \}.
\]
This algorithm is essentially optimal for the set of input functions $\calB_R$, namely,
\begin{multline*}
\COST(\ALG, \calB_R, \varepsilon,R') \le \COMP(\calA(\calB_R),\omega \varepsilon ,R') \\ \forall \, \varepsilon \in (0, \varepsilon_{\max}], \ R \in [R_{\min}, \infty),
\end{multline*}
where $\varepsilon_{\max}$ and $R_{\min}$ are arbitrary and fixed, and $\omega = R_{\min}/R$.
\end{theorem}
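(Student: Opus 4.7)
The plan has two parts: computing the cost of Algorithm \ref{DHKM:BallAlg}, and proving a matching lower bound on the complexity.  The first part is immediate: Algorithm \ref{DHKM:BallAlg} is non-adaptive, choosing $n^* := \min\{n \in \N_0 : \bignorm[\rho']{(\lambda_{\bsk_i})_{i=n+1}^\infty} \le \varepsilon/R\}$ from the parameters $R$, $\bslambda$, and $\varepsilon$ alone, and then sampling the first $n^*$ coefficients regardless of $f$.  Thus $\COST(\ALG,f,\varepsilon) = n^*$ for every $f$, and taking the maximum over any subset of $\calB_R$ preserves that value.

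For the lower bound I will use a fooling-function argument to show that every competitor $\ALG' \in \calA(\calB_R)$, when run at tolerance $\omega\varepsilon$, must query at least $n^*$ coefficients on the zero input.  Since $0 \in \calB_R \cap \calB_{R'}$, this suffices to conclude $\COMP(\calA(\calB_R),\omega\varepsilon,R') \ge n^*$.  Let $\calJ_0$ be the (possibly data-dependent) set of indices $\ALG'$ queries on $f = 0$ and set $n := \abs{\calJ_0}$.  Applying Lemma \ref{DHKM:Key_Lem} with $\calK = \bbK \setminus \calJ_0$ and with the role of $R$ played by $R_{\min}$ produces a fooling function $f^* \in \calF$ whose series coefficients vanish on $\calJ_0$, with $\norm[\calF]{f^*} = R_{\min}$ and $\norm[\calG]{\SOL(f^*)} = R_{\min}\bignorm[\rho']{(\lambda_\bsk)_{\bsk \notin \calJ_0}}$.

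Because $R_{\min} \le R$, the fooling function lies in $\calB_R$, so $\ALG'$ must approximate $\SOL(\pm f^*)$ to within $\omega\varepsilon$.  Since the queried coefficients of $\pm f^*$ agree with those of $0$, the algorithm cannot distinguish the three inputs and returns a common output $g$; the two-point averaging used in the proof of Theorem \ref{DHKM:APP_optimality_thm} then yields $\max\bigl(\norm[\calG]{\SOL(f^*) - g},\, \norm[\calG]{-\SOL(f^*) - g}\bigr) \ge \norm[\calG]{\SOL(f^*)}$.  By the decreasing ordering \eqref{DHKM:lambda_order}, $\bignorm[\rho']{(\lambda_\bsk)_{\bsk \notin \calJ_0}} \ge \bignorm[\rho']{(\lambda_{\bsk_i})_{i > n}}$, and if $n \le n^* - 1$ then the minimality of $n^*$ forces this tail norm to exceed $\varepsilon/R$.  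The error would then strictly exceed $R_{\min}\varepsilon/R = \omega\varepsilon$, contradicting $\ALG' \in \calA(\calB_R)$.  Hence $n \ge n^*$.

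I do not anticipate any deep obstacle; the only delicate point---and the only reason $\omega$ cannot simply be taken to be $1$---is calibrating the fooling function so that it simultaneously (i) lies in $\calB_R$ for the error criterion to apply and (ii) still drives the error past the tolerance.  Scaling its norm to $R_{\min}$ is the unique choice that keeps $f^* \in \calB_R$ uniformly over all $R \ge R_{\min}$ while producing a fresh factor $R_{\min}/R$ on the right-hand side, which is exactly the $\omega = R_{\min}/R$ required by the essential-optimality definition \eqref{DHKM:EssentialOpt}.
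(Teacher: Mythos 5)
Your proposal is correct and follows essentially the same route as the paper: the cost formula is read off from the non-adaptive definition, and the lower bound comes from the Lemma \ref{DHKM:Key_Lem} fooling-function construction combined with the two-point averaging step of Theorem \ref{DHKM:APP_optimality_thm}. The only cosmetic difference is that you normalize the fooling function to $R_{\min}$ and conclude $n \ge n^*$ directly, whereas the paper normalizes to $R'$ and passes through the rescaling identity $n^*(\varepsilon,R) = n^*(\varepsilon R'/R, R') \le n^*(\omega\varepsilon, R')$; both calibrations yield the same factor $\omega = R_{\min}/R$.
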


\begin{proof}  Fix positive $\varepsilon_{\max}$, $R_{\min}$, $R$, and $\omega$ as defined above.  For $0 < \varepsilon \le \varepsilon_{\max}$ and $R_{\min} \le R' \le R$, the information cost of non-adaptive Algorithm  \ref{DHKM:BallAlg} follows from its definition.  Let 
\[
n^*(\varepsilon,R) : = \COST(\ALG, \calB_R, \varepsilon,R').
\]
Construct an input function $f \in \calB_{R'}$ as in the proof of Theorem \ref{DHKM:APP_optimality_thm} with $\calJ = \{\bsk_1, \ldots, \bsk_{n^*(\omega \varepsilon,R')} \}$. By the argument in the proof of Theorem \ref{DHKM:APP_optimality_thm}, any algorithm in $\calA(\calB_{R'})$ that can approximate $\SOL(f)$ with an error no greater than $\omega \varepsilon$ must use at least $n^*(\omega \varepsilon,R')$ series coefficients.  Thus, 
\begin{align*}
\COST(\ALG, \calB_R, \varepsilon,R') & =  n^*(\varepsilon,R) 
= n^*(\varepsilon R'/R,R') \\
& \le n^*(\omega \varepsilon, R') \qquad \text{since } R'/R \ge \omega
\\
& \le \COMP(\calA(\calB_{R'}),\omega \varepsilon, R') \le  \COMP(\calA(\calB_{R}),\omega \varepsilon, R').
\end{align*}
Thus, Algorithm \ref{DHKM:BallAlg} is essentially optimal.
\end{proof} \

For Algorithm \ref{DHKM:BallAlg}, the information cost, $\COST(\ALG,\calB_R,\varepsilon, R)$, depends on the decay rate of the tail norm of the $\lambda_{\bsk_i}$.  This decay may be algebraic or exponential and also determines the problem complexity, $\COMP(\calA(\calB_R),\varepsilon, R)$, as a function of the error tolerance, $\varepsilon$.

This theorem illustrates how an essentially optimal algorithm for solving a problem for a ball of input functions, $\calC = \calB_R$, can be non-adaptive.  However, as alluded to above, we claim that it is impractical to know a priori which ball your input function lies in.  On the other hand, in the situations described below where $\calC$ is a cone, we will show that $\calA(\calC)$ actually contains only adaptive algorithms via the lemma below.  The proof of this lemma follows directly from the definition of non-adaptivity.

\begin{lemma} \label{DHKM:NoNonAdpatLem}
For a given set of input functions, $\calC$, if  $\calA(\calC)$ contains any non-adaptive algorithms, then for every $\varepsilon > 0$,
\begin{equation*}
    \COMP(\calA(\calC),\varepsilon) : = \sup_{R > 0} \COMP(\calA(\calC),\varepsilon, R) < \infty.
\end{equation*}
\end{lemma}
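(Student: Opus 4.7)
The plan is to reduce the statement to a one-line observation about non-adaptive algorithms: by definition, a non-adaptive algorithm chooses its sample indices and the total number of samples in advance of inspecting the input function, so for each fixed error tolerance $\varepsilon$ its information cost is a single number that depends only on $\varepsilon$ (and the algorithm), not on $f$ and hence not on $R$.

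Concretely, I would proceed as follows. First, I would suppose $\ALG^{\mathrm{na}} \in \calA(\calC)$ is any non-adaptive algorithm. By non-adaptivity there exists a function $n^{\mathrm{na}} : (0,\infty) \to \N_0$, depending only on $\ALG^{\mathrm{na}}$, such that $\COST(\ALG^{\mathrm{na}}, f, \varepsilon) = n^{\mathrm{na}}(\varepsilon)$ for every $f \in \calC$. Next, I would substitute this into the definitions: for any $R > 0$,
\begin{equation*}
\COST(\ALG^{\mathrm{na}}, \calC, \varepsilon, R) = \max_{f \in \calC \cap \calB_R} \COST(\ALG^{\mathrm{na}}, f, \varepsilon) = n^{\mathrm{na}}(\varepsilon),
\end{equation*}
which is a finite constant independent of $R$. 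Since the computational complexity is the infimum over all admissible algorithms, this yields $\COMP(\calA(\calC), \varepsilon, R) \le n^{\mathrm{na}}(\varepsilon)$ for every $R > 0$. Taking the supremum over $R$ then gives
\begin{equation*}
\COMP(\calA(\calC), \varepsilon) = \sup_{R > 0} \COMP(\calA(\calC), \varepsilon, R) \le n^{\mathrm{na}}(\varepsilon) < \infty,
\end{equation*}
which is the desired conclusion.

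There is really no obstacle here; the argument is a bookkeeping check that the definitions of non-adaptivity, information cost, and complexity interact in the expected way. The only mildly subtle point is the direction of the implication: the lemma is the contrapositive of the slogan that is used later in the paper, namely that when the complexity grows without bound in $R$ for a cone $\calC$, no non-adaptive algorithm can belong to $\calA(\calC)$, so every admissible algorithm must be adaptive. I would therefore keep the proof short and, if anything, emphasize this contrapositive reading so that subsequent appeals to the lemma (showing that algorithms in Sections \ref{DHKM:pilot_sec} and \ref{DHKM:tracking_sec} are necessarily adaptive) read cleanly.
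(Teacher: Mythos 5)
Your proof is correct and is precisely the argument the paper intends: the paper itself offers no written proof beyond the remark that the lemma ``follows directly from the definition of non-adaptivity,'' and your bookkeeping check (a non-adaptive algorithm's cost $n^{\mathrm{na}}(\varepsilon)$ is independent of $f$ and hence of $R$, so it uniformly bounds $\COMP(\calA(\calC),\varepsilon,R)$ over all $R$) is exactly that definitional unwinding. Your closing remark about the contrapositive reading also matches how the lemma is actually invoked in Theorem \ref{DHKM:PilotCostThm}.
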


%%%%%%%%%%%%%%%%%%%%%%%%%%%%
\subsection{Tractability}\label{DHKM:secTractability}
%%%%%%%%%%%%%%%%%%%%%%%%%%%%

Besides understanding the dependence of $\COMP(\calA(\calC), \varepsilon,R)$ on $\varepsilon$, we also want to understand how $\COMP(\calA(\calC), \varepsilon,R)$ depends on the dimension of the domain of the input function.  Suppose that $f: \Omega^d \to \R$, for some $\Omega \subseteq \R$, and let $\calF_d$ denote the dependence of the input space on the dimension $d$.  
The set of functions for which our algorithms succeed, $\calC_d$, depends on the dimension, too.  Also, $\SOL$, $\APP$, $\COST$, and $\COMP$ depend implicitly on dimension, and this dependence is sometimes indicated explicitly by the subscript $d$.

\bigskip

Different dependencies of $\COMP(\calA(\calC_d), \varepsilon,R)$ on the dimension $d$ and the error tolerance $\varepsilon$ are formalized as different notions of tractability. Since 
the complexity is defined in terms of the best available algorithm, tractability is a property that is inherent to the problem, not to a particular algorithm. 
We define the following notions of tractability (for further information on tractability we refer to the trilogy 
\cite{NovWoz08a}, \cite{NovWoz10a}, \cite{NovWoz12a}). Note that in contrast to 
these references we explicitly include the dependence on $R$ in our definitions. This 
dependence is natural for cones $\calC$ and might be different if $\calC$ is not a cone. 

\begin{itemize}   
\item We say that the adaptive approximation problem is strongly polynomially tractable
if and only if there are non-negative $C$, $p$, $\varepsilon_{\max}$, and $R_{\min}$ such that   
$$   
\COMP(\calA(\calC_d), \varepsilon,R)\le C\,R^p\,\varepsilon^{-p} \qquad \forall d\in\NN,\ \varepsilon \in (0, \varepsilon_{\max}], \ R \in [R_{\min}, \infty).
$$   
The infimum of $p$ satisfying the bound above is denoted by $p^*$   
and is called the exponent of strong polynomial tractability.    
\newline \qquad   

\item    
We say that the problem is polynomially tractable
if and only if there are non-negative $C,p$, $q$,  $\varepsilon_{\max}$, and $R_{\min}$ such that   
$$   
\COMP(\calA(\calC_d), \varepsilon,R)\le C\,d^{\,q}\,R^p\,   
\varepsilon^{-p}\qquad \forall d\in\NN,\ \varepsilon \in (0, \varepsilon_{\max}], \ R \in [R_{\min}, \infty).  
$$   
\vskip 0.5pc

\item   
We say that the problem is weakly tractable iff    
$$   
\lim_{d+R\,\varepsilon^{-1}\to\infty}\   
\frac{\log\, \COMP(\calA(\calC_d), \varepsilon,R)}   
{d+R\,\varepsilon^{-1}}\,=\,0.   
$$    
\end{itemize}   

Necessary and sufficient conditions on these tractability notions will be studied 
for different types of algorithms in Sections \ref{DHKM:SecPilotTract} and \ref{DHKM:SecDecayTract}. 

We remark that, for the sake of brevity, we focus here on tractability notions that are summarized as 
algebraic tractability in the recent literature 
(see, e.g., \cite{KriWoz19}). Theoretically, one could also study exponential tractability, where 
one would essentially replace $\varepsilon^{-1}$ 
by $\log (1 + \varepsilon^{-1})$ in the previous tractability notions. A more detailed study of 
tractability will be done in a future paper.

%%%%%%%%%%%%%%%%%%%%%%%%%%
\subsection{The Illustrative Example Revisited}\label{DHKM:revisexamp}
%%%%%%%%%%%%%%%%%%%%%%%%%%%
The example in Section \ref{DHKM:secexamp} chooses $\rho = \tau = 2$ and $\rho' = \infty$.  Thus, we obtain by Theorem \ref{DHKM:thm_cost_non_adapt}:
\begin{align*}
	\COMP(\calA(\calB_{R}),\varepsilon,R) &= \COST(\ALG,\calB_{R},\varepsilon,R) 
	=\min\{ n \in \N_0 : \lambda_{\bsk_{n+1}} \le \varepsilon/R \}.
\end{align*}
Using the non-increasing ordering of the $\lambda_{\bsk_i}$, we employ a standard technique for bounding the $n+1^\text{st}$ largest $\lambda_{\bsk}$ in terms of the sum of the $p^\text{th}$ power of all the $\lambda_{\bsk}$.  For $0  < 1/r < p$, 
\begin{align*}
(n+1)\lambda_{\bsk_{n+1}}^{p} &\le \sum_{i=1}^{n+1}\lambda_{\bsk_{i}}^{p} \le \sum_{\bsk \in \bbN_0^d}\lambda_{\bsk}^{p} = \prod_{\ell =1}^d \left[1 + w_\ell^{p}\sum_{k = 1}^\infty \frac {1}{k^{pr}} \right ] \\
& =  \prod_{\ell =1}^d \left[1 + w_\ell^{p}\zeta(pr)\right ] 
= \exp\left(\sum_{\ell = 1}^d \log\bigl(1 + w_\ell^{p}\zeta(pr) \bigr) \right) \\
& \le \exp\left(\zeta(pr) \sum_{\ell = 1}^{\infty} w_\ell^{p} \right) \qquad \text{since }\log(1+x) \le x \ \forall x\ge 0.
\end{align*}
Hence, substituting the above upper bound on $\lambda_{\bsk_{n+1}}$ into the formula for the complexity of the problem, we obtain an upper bound on the complexity:
\begin{align*}
    \MoveEqLeft{\COMP(\calA(\calB_{R}),\varepsilon,R)} \\
  &\le\min\left\{ n \in \N_0 : \frac{1}{n+1}\exp\left(\zeta(pr) \sum_{\ell = 1}^\infty w_\ell^{p} \right)  \le \left( \frac{\varepsilon}{R} \right)^{p}  \right\} \\
  &= \left \lceil \left( \frac{R}{\varepsilon} \right)^{p}  \exp\left(\zeta(pr) \sum_{\ell = 1}^{\infty} w_\ell^{p} \right) \right \rceil - 1.
\end{align*}

If $p^\dagger$ is the infimum of the $p$ for which $\sum_{\ell = 1}^{\infty} w_\ell^{p}$ is finite, and $p^\dagger$ is finite, then we obtain strong polynomial tractability and an exponent of strong tractability that is $p^* = \max(1/r,p^\dagger)$. On the other hand, if the coordinate weights are all unity,  $w_1 = w_2 = \cdots = 1$, then there are $2^d$ different $\lambda_{\bsk}$ with a value of $1$, and so $\COMP(\calA(\calB_{R}),\varepsilon,R) \ge 2^d$, and the problem is not tractable.

\subsection{What Comes Next}
In the following section we define a cone of input functions, $\calC$, in \eqref{DHKM:pilot_cone} whose norms can be bounded above in terms of the series coefficients obtained from a pilot sample.  Adaptive Algorithm \ref{DHKM:PilotConeAlg} is shown to be optimal for this $\calC$.  We also identify necessary and sufficient conditions for tractability.

Section \ref{DHKM:tracking_sec} considers the situation where function data is relatively inexpensive, and we track the decay rate of the series coefficients.  Adaptive Algorithm \ref{DHKM:TrackConeAlg} is shown to be optimal in this situation.

Section \ref{DHKM:smoothimportance_sec} considers the case where the most suitable weights $\bslambda$ are not known a priori and are instead inferred from function data.  Adaptive Algorithm \ref{DHKM:InfPilotConeAlg} combines this inference step with Algorithm \ref{DHKM:PilotConeAlg} to construct an approximation that satisfies the error criterion.

%%%%%%%%%%%%%%%%%%%%%%%%%%%%%%%%%%%%%%%%%%%%%%%%%%%%%%%%%%%%%%%%%%%%%%%%%%%%%
%%%%%%%%%%%%%%%%%%%%%%%%%%%%%%%%%%%%%%%%%%%%%%%%%%%%%%%%%%%%%%%%%%%%%%%%%%%%%
%%%%%%%%%%%%%%%%%%%%%%%%%%%%%%%%%%%%%%%%%%%%%%%%%%%%%%%%%%%%%%%%%%%%%%%%%%%%%
%%%%%%%%%%%%%%%%%%%%%%%%%%%%%%%%%%%%%%%%%%%%%%%%%%%%%%%%%%%%%%%%%%%%%%%%%%%%%
\section{Bounding the Norm of the Input Function Based on a Pilot Sample} 
\label{DHKM:pilot_sec} 
%%%%%%%%%%%%%%%%%%%%%%%%%%%%%%%%%%%%%%%%%%%%%%%%%%%%%%%%%%%%%%%%%%%%%%%%%%%%%
%%%%%%%%%%%%%%%%%%%%%%%%%%%%%%%%%%%%%%%%%%%%%%%%%%%%%%%%%%%%%%%%%%%%%%%%%%%%%
%%%%%%%%%%%%%%%%%%%%%%%%%%%%%%%%%%%%%%%%%%%%%%%%%%%%%%%%%%%%%%%%%%%%%%%%%%%%%

%%%%%%%%%%%%%%%%%%%%%%%%%%%%
\subsection{The Cone and the Optimal Algorithm}
%%%%%%%%%%%%%%%%%%%%%%%%%%%%

The premise of an adaptive algorithm is that the finite information we observe about the input function 
tells us something about what is not observed.  Let $n_1$ denote the number of pilot observations, based on the set of wavenumbers
\begin{equation} \label{DHKM:KOnedef}
    \calK_1 := \{\bsk_1, \ldots, \bsk_{n_1} \},
\end{equation}
where the $\bsk_i$ are defined by the ordering of the $\lambda_{\bsk}$ in \eqref{DHKM:lambda_order}.  Let $A$ be some constant inflation factor greater than one.  The cone of functions whose norm can be bounded well in terms of a pilot sample, $\{\hf(\bsk_1), \ldots, \hf(\bsk_{n_1})\}$,
is given by
\begin{equation} \label{DHKM:pilot_cone}
    \calC = \left \{ f \in \calF : \norm[\calF]{f} \le A \norm[\rho]{\left( \frac{\hf(\bsk)}{\lambda_{\bsk}} \right)_{\bsk \in \calK_1}} \right\}.
\end{equation}

Referring to error bound \eqref{DHKM:Refined_APP_err}, we see that the error of $\APP(f,n)$ depends on the series coefficients not sampled.  The definition of $\calC$ allows us to bound these as follows: 
\begin{align*}
     \norm[\rho]{\left(\frac{\hf(\bsk_i)}{\lambda_{\bsk_i}}\right)_{i=n+1}^\infty} & =  \left[ \norm[\calF]{f}^\rho -  \norm[\rho]{\left(\frac{\hf(\bsk_i)}{\lambda_{\bsk_i}}\right)_{i=1}^n}^\rho
    \right]^{1/\rho} \qquad \forall f \in \calF \\
    &  \le  \left[ A^\rho \norm[\rho]{\left( \frac{\hf(\bsk)}{\lambda_{\bsk}} \right)_{\bsk \in \calK_1}}^\rho -  \norm[\rho]{\left(\frac{\hf(\bsk_i)}{\lambda_{\bsk_i}}\right)_{i=1}^n}^\rho
    \right]^{1/\rho} \quad \forall f \in \calC.
\end{align*}
This inequality together with error bound \eqref{DHKM:Refined_APP_err} implies the data-based error bound 
\begin{subequations} \label{DHKM:pilot_errbd}
\begin{equation}
\norm[\calG]{\SOL(f) - \APP(f,n)}  \le \ERRN  \qquad \forall f \in \calC,
\end{equation}
where 
\begin{multline}
\ERRN \\
    : =  
    \left[ A^\rho \norm[\rho]{\left( \frac{\hf(\bsk)}{\lambda_{\bsk}} \right)_{\bsk \in \calK_1}}^\rho -  \norm[\rho]{\left(\frac{\hf(\bsk_i)}{\lambda_{\bsk_i}}\right)_{i=1}^n}^\rho \right]^{1/\rho} 
    \, \bignorm[\rho']{\bigl(  \lambda_{\bsk_i}  \bigr)_{i = n+1}^{\infty}} , 
    \\ n \ge n_1.
\end{multline}
\end{subequations}
This error bound decays as $\bignorm[\rho]{\bigl( f(\bsk_i)/\lambda_{\bsk_i} \bigr)_{i=1}^n}$ increases and as the tail norm of the $\lambda_{\bsk_i}$ decreases.  This data-driven error bound underlies  Algorithm \ref{DHKM:PilotConeAlg}, which is successful for $\calC$ defined in \eqref{DHKM:pilot_cone}:

\begin{algorithm}
	\caption{$\ALG$ Based on a Pilot Sample\label{DHKM:PilotConeAlg}} 
	\begin{algorithmic}
	\PARAM the Banach spaces $\calF$ and $\calG$, including the weights $\bslambda$; an initial sample size, $n_1 \in \N$; an inflation factor, $A > 1$; $\APP$ satisfying \eqref{DHKM:Refined_APP_err}
		\INPUT a black-box function, $f$; an absolute error tolerance,
		$\varepsilon>0$

\Ensure Error criterion \eqref{DHKM:err_crit} for  the cone defined in \eqref{DHKM:pilot_cone}

\State Let $n \leftarrow n_1 -1$
\Repeat

\State Let $n \leftarrow n + 1$

\State Compute $\ERRN$ as defined in \eqref{DHKM:pilot_errbd}

\Until $\ERRN \le \varepsilon$

\RETURN $\ALG(f,\varepsilon) = \APP(f,n)$

\end{algorithmic}
\end{algorithm}

\begin{theorem} \label{DHKM:PilotCostThm}
Algorithm \ref{DHKM:PilotConeAlg} yields an answer satisfying absolute error criterion \eqref{DHKM:err_crit}, i.e., $\ALG \in \calA(\calC)$ for $\calC$ defined in \eqref{DHKM:pilot_cone}.  The information cost is
\begin{multline} \label{DHKM:PilotConeAlg_cost}
    \COST(\ALG,\calC,\varepsilon,R) \\
    = \min \left \{n \ge n_1 : \bignorm[\rho']{\bigl(  \lambda_{\bsk_i}  \bigr)_{i = n+1}^{\infty}} \,
    \le \varepsilon/[(A^\rho -1)^{1/\rho}R] \right \}.
\end{multline}
There exist positive $\varepsilon_{\max}$ and $R_{\min}$ for which the computational complexity has the lower bound
\begin{multline} \label{DHKM:PilotConeAlg_comp}
        \COMP(\calA(\calC),\varepsilon,R) \ge \min \left \{n \ge n_1 : \bignorm[\rho']{\bigl(  \lambda_{\bsk_i}  \bigr)_{i = n+1}^{\infty}} \,
    \le 2\varepsilon/[(1 - 1/A) R] \right \} \\
    \forall \varepsilon \in (0, \varepsilon_{\max}], \ R \in [R_{\min}, \infty).
\end{multline}
Algorithm \ref{DHKM:PilotConeAlg} is essentially optimal.  Moreover, $\calA(\calC)$ contains only adaptive algorithms.
\end{theorem}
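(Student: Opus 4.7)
The plan is to establish the four parts of the theorem in sequence. \textbf{Validity} of Algorithm \ref{DHKM:PilotConeAlg} is immediate from the data-driven error bound \eqref{DHKM:pilot_errbd} combined with the stopping rule $\ERRN \le \varepsilon$; termination follows because $\bignorm[\rho']{\bigl(\lambda_{\bsk_i}\bigr)_{i=n+1}^\infty} \to 0$ by \eqref{DHKM:SOLNormFinite} while the bracket factor in $\ERRN$ stays bounded. For the \textbf{cost formula} \eqref{DHKM:PilotConeAlg_cost}, I will bound the bracket uniformly over $f \in \calC \cap \calB_R$ when $n \ge n_1$: since $\calK_1 \subseteq \{\bsk_1, \ldots, \bsk_n\}$ the subtracted sum dominates the first sum, and the cone inclusion $f \in \calC$ keeps the bracket non-negative; a short calculation then gives bracket $\le (A^\rho - 1)^{1/\rho} \norm[\calF]{f} \le (A^\rho - 1)^{1/\rho} R$, hence the stated upper bound on $\COST$. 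Tightness comes from the explicit worst case $f$ supported on $\calK_1$ with $\norm[\calF]{f} = R$, for which both sums inside the bracket equal $R^\rho$ and $\ERRN$ hits the threshold exactly.

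The \textbf{lower bound} \eqref{DHKM:PilotConeAlg_comp} is the heart of the proof and uses a two-stage fooling-function argument. Fix an arbitrary $\ALG' \in \calA(\calC)$ and the base function $f_0 := (R/A) \lambda_{\bsk_1} u_{\bsk_1}$, which lies in $\calC \cap \calB_R$ with $\norm[\calF]{f_0} = R/A$; let $\calJ$ be the (possibly adaptively chosen) sample set of $\ALG'$ on $f_0$. In the first stage I show that, for $\varepsilon \le \varepsilon_{\max}$ and $R \ge R_{\min}$ with $\varepsilon_{\max}/R_{\min}$ sufficiently small relative to $\lambda_{\bsk_{n_1}}$, $A$, and $\rho$, the set $\calJ$ must contain all of $\calK_1$. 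Otherwise, picking any $\bsk \in \calK_1 \setminus \calJ$ and considering the perturbations $f_0 \pm c u_\bsk$, both of which lie in $\calC \cap \calB_R$ for $c$ up to a positive constant times $\lambda_\bsk R$ and produce the same samples as $f_0$ to any adaptive algorithm, would force $\norm[\calG]{\SOL(f_0 + c u_\bsk) - \SOL(f_0 - c u_\bsk)} = 2c \le 2\varepsilon$, a contradiction. Hence $|\calJ| \ge n_1$. In the second stage, for any $h$ supported on $\bbK \setminus \calJ$ (and in particular outside $\calK_1$) with $\norm[\calF]{h} \le R(1 - 1/A^\rho)^{1/\rho}$, disjoint-support additivity of the $\rho$-norm places $f_0 + h \in \calC \cap \calB_R$; since $h$ vanishes on $\calJ$, the adaptive schedule gives $\ALG'(f_0 + h) = \ALG'(f_0)$, so the triangle inequality yields $\norm[\calG]{\SOL(h)} \le 2\varepsilon$. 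Maximizing via Lemma \ref{DHKM:Key_Lem}, replacing $\calJ$ by the top $|\calJ|$ indices, and weakening $(1 - 1/A^\rho)^{1/\rho} \ge 1 - 1/A$ deliver \eqref{DHKM:PilotConeAlg_comp}.

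\textbf{Essential optimality} then follows by choosing $\omega := (1 - 1/A) / [2 (A^\rho - 1)^{1/\rho}]$, which makes the $\COST$ threshold $\varepsilon/[(A^\rho - 1)^{1/\rho} R]$ at least as large as the complexity threshold $2\omega\varepsilon/[(1 - 1/A) R]$ and hence $\COST(\ALG, \calC, \varepsilon, R) \le \COMP(\calA(\calC), \omega\varepsilon, R)$ in the required range. Non-existence of any non-adaptive algorithm in $\calA(\calC)$ follows because \eqref{DHKM:PilotConeAlg_comp} forces $\COMP(\calA(\calC),\varepsilon,R) \to \infty$ as $R \to \infty$: the tail threshold $2\varepsilon/[(1-1/A)R]$ tends to zero, while every finite tail norm is strictly positive. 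Thus $\sup_{R > 0} \COMP(\calA(\calC),\varepsilon,R) = \infty$ and Lemma \ref{DHKM:NoNonAdpatLem} rules out every non-adaptive algorithm. The main obstacle will be the first-stage fooling argument $\calJ \supseteq \calK_1$: it has to reconcile the cone, ball, and adaptive-invisibility constraints simultaneously, and it is precisely what pins down the explicit admissible ranges $\varepsilon_{\max}$ and $R_{\min}$.
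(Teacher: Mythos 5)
Your proposal is correct, and the validity, cost-formula, and essential-optimality parts coincide with the paper's argument (same bracket estimate via $\calK_1 \subseteq \{\bsk_1,\ldots,\bsk_n\}$, same tight example supported on $\calK_1$, same $\omega = (1-1/A)/[2(A^\rho-1)^{1/\rho}]$). Where you genuinely diverge is the complexity lower bound. The paper uses a single-stage construction: $f_1$ is spread over all of $\calK_1$ with $\norm[\calF]{f_1} = R(1+1/A)/2$, $f_2$ is supported off the sample set $\calJ$ with $\norm[\calF]{f_2} = R(1-1/A)/2$, and cone membership of $f_\pm = f_1 \pm f_2$ is verified by the triangle inequality on the pilot norm ($\ge \norm[\calF]{f_1} - \norm[\calF]{f_2} = R/A$); the requirement $n > n_1$ is then extracted a posteriori from the choice of $\varepsilon_{\max}, R_{\min}$ rather than proved directly. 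Your version concentrates the base function at $\bsk_1$ with $\norm[\calF]{f_0} = R/A$, establishes $\calJ \supseteq \calK_1$ by a separate single-coordinate fooling step, and then exploits disjoint-support additivity to give the perturbation the larger norm $R(1-A^{-\rho})^{1/\rho} \ge 1 - 1/A > (1-1/A)/2$; combined with the sharper averaging bound $\norm[\calG]{\SOL(h)} \le \varepsilon$ (your stated $2\varepsilon$ is a harmless weakening), this actually yields a slightly stronger lower bound than \eqref{DHKM:PilotConeAlg_comp}, at the price of an extra stage and a condition on $\varepsilon_{\max}/R_{\min}$ tied to $\lambda_{\bsk_{n_1}}$ rather than to the tail norm at $n_1$. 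Both routes are sound; the paper's trades constants for a one-shot construction, yours trades an extra fooling step for additivity (exact norm computations) and better constants. One small caution: disjoint-support additivity of the $\rho$-norm as you invoke it is literal only for $\rho < \infty$; for $\rho = \infty$ replace it by the corresponding max identity, which only improves your bound.
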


\begin{proof} 
The upper bound on the computational cost of this algorithm is obtained by noting that 
\begin{align*}
    \MoveEqLeft[1]{\COST(\ALG,\calC,\varepsilon,R)} \\
    & = \max_{f \in \calC \cap \calB_{R}} \min \left \{n \ge n_1 : \ERRN \le \varepsilon \right \} \\
     & \le \max_{f \in \calC \cap \calB_{R}} \min \left \{n \ge n_1 : 
     (A^\rho -1)^{1/\rho} \norm[\rho]{\left( \frac{\hf(\bsk)}{\lambda_{\bsk}} \right)_{\bsk \in \calK_1 }} \, 
     \bignorm[\rho']{\bigl(  \lambda_{\bsk_i}  \bigr)_{i = n+1}^{\infty}}
    \le \varepsilon \right \} \\   
     & \le \min \left \{n \ge n_1 : 
     (A^\rho -1)^{1/\rho} R \bignorm[\rho']{\bigl(  \lambda_{\bsk_i}  \bigr)_{i = n+1}^{\infty}} 
    \le \varepsilon \right \},  
\end{align*}
since $\bignorm[\rho]{\bigl( \hf(\bsk)/\lambda_{\bsk} \bigr)_{\bsk \in \calK_1}} \le \bignorm[\rho]{\bigl( \hf(\bsk_i)/\lambda_{\bsk_i} \bigr)_{i=1}^{n}} \le  \norm[\calF]{f} \le R$ for all $f \in \calB_R$, $n \ge n_1$.  Moreover, this inequality is tight for some $f \in \calC \cap \calB_{R}$, namely, those certain $f$ for which $\hf(\bsk_i) = 0$ for $i > n_1$.  This completes the proof of \eqref{DHKM:PilotConeAlg_cost}.

To prove the lower complexity bound, choose $\varepsilon_{\max}$ and $R_{\min}$ such that 
\[
\bignorm[\rho']{\bigl(  \lambda_{\bsk_i}  \bigr)_{i = n_1+1}^{\infty}} \,
    > 2\varepsilon_{\max}/[(1 - 1/A) R_{\min}].
    \]
Let $\ALG'$ be any algorithm that satisfies the error criterion, \eqref{DHKM:err_crit}, for this choice of $\calC$ in \eqref{DHKM:pilot_cone}.   Fix $R \in [R_{\min},\infty)$ and $\varepsilon \in (0,\varepsilon_{\max}]$ arbitrarily.  Two fooling functions will be constructed of the form $f_\pm = f_1 \pm f_2$.  

The input function $f_1$ is defined via its series coefficients as in Lemma \ref{DHKM:Key_Lem}, having nonzero coefficients only for $\bsk \in \calK_1$:
\begin{equation*}
    \bigabs{\hf_1(\bsk)} = \begin{cases} \displaystyle \frac{R (1+1/A) \lambda_{\bsk}^{\rho'/\rho + 1}}{2\bignorm[\rho']{\bigl(  \lambda_{\bsk}  \bigr)_{\bsk \in \calK_1}}^{\rho'/\rho}}, &  \bsk \in \calK_1, \\
    0, & \bsk \notin \calK_1,
    \end{cases}
   \qquad \norm[\calF]{f_1} = \frac{R(1 + 1/A)}{2}.
\end{equation*}
Suppose that $\ALG'(f_1,\varepsilon)$ samples the series coefficients $\hf_1(\bsk)$ for $\bsk \in \calJ$, and let $n$ denote the cardinality of $\calJ$.  

Now, construct the input function $f_2$, having zero coefficients for $\bsk \in \calJ$ and also as in Lemma \ref{DHKM:Key_Lem}:
\begin{gather}
\nonumber
    \bigabs{\hf_2(\bsk)} = \begin{cases} \displaystyle \frac{R (1-1/A) \lambda_{\bsk}^{\rho'/\rho + 1}}{2\bignorm[\rho']{\bigl(  \lambda_{\bsk}  \bigr)_{\bsk \notin \calJ}}^{\rho'/\rho}}, &  \bsk \notin \calJ, \\
    0, & \bsk \in \calJ, 
    \end{cases}
    \qquad \norm[\calF]{f_2} = \frac{R(1 - 1/A)}{2}, \\
    \label{DHKM:SOLf2bd}
    \norm[\calG]{\SOL(f_2)} = \frac{R(1 - 1/A)}{2} \, \bignorm[\rho']{\bigl(  \lambda_{\bsk}  \bigr)_{\bsk \notin \calJ}}.
\end{gather}
Let $f_{\pm} = f_1 \pm f_2$.  By the definitions above, it follows that
\begin{align}
\nonumber
    \norm[\calF]{f_{\pm}} &= \norm[\calF]{ f_1 \pm f_2 } \le \norm[\calF]{ f_1} + \norm[\calF]{ f_2 } =  R, \\
    \nonumber
    \norm[\rho]{\left( \frac{\hf_\pm(\bsk_i)}{\lambda_{\bsk_i}} \right)_{i=1}^{n_1}} 
    & = \norm[\rho]{\left( \frac{\hf_1(\bsk_i) \pm \hf_2(\bsk_i)}{\lambda_{\bsk_i}} \right)_{i=1}^{n_1}} \\
    \nonumber
    & \ge \norm[\rho]{\left( \frac{\hf_1(\bsk_i)}{\lambda_{\bsk_i}} \right)_{i=1}^{n_1}} - \norm[\rho]{\left( \frac{\hf_2(\bsk_i)}{\lambda_{\bsk_i}} \right)_{i=1}^{n_1}} \\
    \nonumber
    & \ge \norm[\calF]{ f_1} - \norm[\calF]{ f_2 } =  \frac{R}{A} \ge \frac{\norm[\calF]{f_{\pm}}}{A}.
\end{align}
Therefore, $f_\pm \in \calC \cap \calB_R$.  Moreover, since the series coefficients for $f_\pm$ are the same for $\bsk \in \calJ$, it follows that $\ALG'(f_+,\varepsilon) = \ALG'(f_-,\varepsilon)$.  Thus, $\SOL(f_{+})$ must be quite similar to $\SOL(f_{-})$.

Using an argument like that in the proof of  Theorem \ref{DHKM:APP_optimality_thm}, it follows that 
\begin{align*}
\varepsilon & \ge \max_{\pm} \norm[\calG]{\SOL(f_{\pm}) - \ALG'(f_{\pm},\varepsilon)} 
=  \max_{\pm} \norm[\calG]{\SOL(f_{\pm}) - \ALG'(f_{+},\varepsilon)} \\
& \ge \frac 12 \left [ \norm[\calG]{\SOL(f_{+}) - \ALG'(f_{+},\varepsilon)} 
+ \norm[\calG]{\SOL(f_{-}) - \ALG'(f_{+},\varepsilon)}  \right] \\
& \ge \frac 12 \norm[\calG]{\SOL(f_+ - f_-)} = \norm[\calG]{\SOL(f_2)} 
=\frac{R(1 - 1/A)}{2} \, \bignorm[\rho']{\bigl(  \lambda_{\bsk}  \bigr)_{\bsk \notin \calJ}} \qquad \text{by \eqref{DHKM:SOLf2bd}} \\
& \ge \frac{R(1 - 1/A)}{2} \, \bignorm[\rho']{\bigl(  \lambda_{\bsk_i}  \bigr)_{i = n+1}^\infty},
\end{align*}
by the ordering of the $\bsk$ in \eqref{DHKM:lambda_order}.  By the choice of $R_{\min}$ and $\varepsilon_{\max}$ above, it follows that $n > n_1$.  This inequality then implies lower complexity bound \eqref{DHKM:PilotConeAlg_comp}. Because $\lim_{R \to \infty} \COMP(\calA(\calC), \varepsilon,R) = \infty$  it follows from Lemma \ref{DHKM:NoNonAdpatLem} that $\calA(\calC)$ contains only adaptive algorithms.

The essential optimality of Algorithm \ref{DHKM:PilotConeAlg} follows by observing that 
\[
\COST(\ALG,\calC,\varepsilon,R) \le \COMP(\calA(\calC),\omega \varepsilon,R) \qquad \text{for } \omega = \frac{1-1/A}{2(A^\rho -1)^{1/\rho}}.
\]
This satisfies definition \eqref{DHKM:EssentialOpt}.  
\end{proof} \

The above derivation assumes that $A > 1$.  If $A =1$, then our cone consists of functions whose series coefficients vanish for wavenumbers outside $\calK_1$.  The exact solution can be constructed using only the pilot sample.  Our algorithm is then non-adaptive, but succeeds for input functions in the cone $\calC$, which is an unbounded set.

We may not be able to guarantee that a particular $f$ of interest lies in our cone, $\calC$, but we may derive necessary conditions for $f$ to lie in $\calC$.  The following proposition follows from the definition of $\calC$ in \eqref{DHKM:pilot_cone} and the fact that the term on the left below underestimates $\norm[\calF]{f}$.

\begin{proposition}
If $f \in \calC$, then 
\begin{equation} \label{DHKM:PilotConeNecessary}
    \norm[\rho]{\left( \frac{\hf(\bsk_i)}{\lambda_{\bsk_i}} \right)_{\bsk_i =1}^{n}} \le A
    \norm[\rho]{\left( \frac{\hf(\bsk)}{\lambda_{\bsk}} \right)_{\bsk \in \calK_1}} \qquad \forall n \in \NN.
\end{equation}
\end{proposition}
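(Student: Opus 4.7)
The plan is to observe that the proposition is essentially an immediate consequence of the monotonicity of $\ell^\rho$ norms over truncations together with the defining inequality of the cone $\calC$. Concretely, for any $n \in \NN$, the sequence $(\hf(\bsk_i)/\lambda_{\bsk_i})_{i=1}^n$ is a truncation of the full indexed sequence $(\hf(\bsk)/\lambda_{\bsk})_{\bsk \in \bbK}$, so its $\ell^\rho$ norm is bounded above by the $\ell^\rho$ norm of the full sequence, which is exactly $\norm[\calF]{f}$.

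First I would write
\[
\norm[\rho]{\left( \frac{\hf(\bsk_i)}{\lambda_{\bsk_i}} \right)_{i=1}^{n}} \le \norm[\rho]{\left( \frac{\hf(\bsk)}{\lambda_{\bsk}} \right)_{\bsk \in \bbK}} = \norm[\calF]{f},
\]
valid for every $n \in \NN$ (regardless of whether $n \le n_1$ or $n > n_1$), with the inequality following from the fact that restricting the index set can only decrease a nonnegative $\ell^\rho$ sum (or supremum, if $\rho = \infty$). Then I would invoke the membership condition $f \in \calC$ from \eqref{DHKM:pilot_cone}, which states precisely that
\[
\norm[\calF]{f} \le A \norm[\rho]{\left( \frac{\hf(\bsk)}{\lambda_{\bsk}} \right)_{\bsk \in \calK_1}}.
\]
Chaining the two displayed inequalities gives \eqref{DHKM:PilotConeNecessary}.

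There is no real obstacle here; the statement is essentially a restatement of the cone condition combined with the trivial fact that a partial $\ell^\rho$ norm of a sequence is dominated by the total $\ell^\rho$ norm. The only minor point worth noting in the write-up is that the bound must be asserted uniformly in $n \in \NN$, including the cases $n < n_1$ and $n > n_1$; the monotonicity argument covers all of them uniformly, so no separate case analysis is required.
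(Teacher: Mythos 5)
Your proof is correct and is exactly the paper's argument: the paper justifies the proposition by noting that the left-hand side underestimates $\norm[\calF]{f}$ (monotonicity of the $\ell^\rho$ norm under restriction of the index set) and then applying the cone condition \eqref{DHKM:pilot_cone}. Nothing further is needed.
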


If condition \eqref{DHKM:PilotConeNecessary} is violated in practice, then $f \notin \calC$, and Algorithm \ref{DHKM:PilotConeAlg} may output an incorrect answer.  The remedy is to make $\calC$ more inclusive by increasing the inflation factor, $A$, and/or the pilot sample size, $n_1$.

%%%%%%%%%%%%%%%%%%%%%%%%%%%%
\subsection{Tractability}\label{DHKM:SecPilotTract}
%%%%%%%%%%%%%%%%%%%%%%%%%%%%

In this section, we write $\calC_d$ instead of $\calC$, to stress the dependence on $d$, and for the same reason we write $\lambda_{d,\bsk_i}$ instead of $\lambda_{\bsk_i}$. Recall that we assume that 
$\lambda_{d,\bsk_1}\ge \lambda_{d,\bsk_2}\ge \cdots >0$. Let 
\[
n(\delta,d) :=\min \left \{n\ge 0: \bignorm[\rho']{\bigl(  \lambda_{d,\bsk_i}  \bigr)_{i = n+1}^{\infty}} \,
    \le \delta \right \} \qquad \forall \delta > 0.
\]
From 
Equations \eqref{DHKM:PilotConeAlg_cost} and \eqref{DHKM:PilotConeAlg_comp}, we obtain that 
\begin{multline*}
     \COMP(\calA(\calC_d),\omega_{\textup{lo}} \varepsilon,R) \le n(\varepsilon/R,d) \le 
        \COMP(\calA(\calC_d),\omega_{\textup{hi}} \varepsilon,R)  \\
        \forall \varepsilon \in (0, \varepsilon_{\max}], \ R \in [R_{\min}, \infty),
\end{multline*}
where the positive constants $\omega_{\textup{lo}}$ and $\omega_{\textup{hi}}$  depend on $A$, but not depend on $d$, $\varepsilon$, or $R$.  From the equation above, it is clear that tractability depends on the behavior of $n(\varepsilon/R,d)$ as $R/\varepsilon$ and $d$ tend to infinity. We would like to study under which conditions we obtain the various tractability notions defined in Section \ref{DHKM:secTractability}. 

To this end, we distinguish two cases, depending on whether $\rho'$ is infinite or not. This 
distinction is useful because it allows us to relate the computational complexity of the algorithms 
considered in this chapter to the computational complexity of linear problems on certain function spaces considered in the classical literature on information-based complexity, as for example \cite{NovWoz08a}. The case $\rho'=\infty$ corresponds to the
worst-case setting, where one studies the worst performance of an algorithm over the unit ball of 
a space. The results in Theorem \ref{DHKM:thmtract1} below are indeed very similar to the results 
for the worst-case setting over balls of suitable function spaces. The case $\rho<\infty$ corresponds to the so-called average-case setting, where one 
considers the average performance over a function space equipped with a suitable measure. 
For both of these settings there exist tractability results that we will make use of here.

\paragraph*{CASE 1: $\rho'=\infty$:}

If $\rho'=\infty$, we have, due to the monotonicity of the $\lambda_{d,\bsk_i}$, 
\[
n(\varepsilon/R,d)=\min \left \{n\ge 0\colon \lambda_{d,\bsk_{n+1}} \,
    \le \varepsilon/R \right \}.
\]
We then have the following theorem.

\begin{theorem} \label{DHKM:thmtract1}
Using the same notation as above, the following statements hold for the case $\rho'=\infty$.
 \begin{itemize}
  \item[1.] 
  We have strong polynomial tractability if and only if there exist $\eta>0$ and $i_0\in\NN$ such that
 \begin{equation}\label{DHKM:eq:condspt}
    \sup_{d\in\NN} \sum_{i=i_0}^\infty \lambda_{d,\bsk_i}^\eta < \infty.
 \end{equation}
 Furthermore, the exponent of strong polynomial tractability is then equal to the infimum of those $\eta>0$ for which \eqref{DHKM:eq:condspt} holds. 
 \item[2.] 
  We have polynomial tractability if and only if there exist $\eta_1, \eta_2 \ge 0$ and $\eta_3, K>0$ such that
 \[
    \sup_{d\in\NN} d^{-\eta_1}\, \sum_{i=\lceil K d^{\eta_2} \rceil}^\infty \lambda_{d,\bsk_i}^{\eta_3} < \infty.
 \]
 \item[3.] 
 We have weak tractability if and only if 
 \begin{equation}\label{DHKM:eq:condwt}
  \sup_{d\in\NN} \, \exp(-cd) \sum_{i=1}^\infty \exp\left(-c\left(\frac{1}{\lambda_{d,\bsk_i}}\right)\right) <\infty\quad \mbox{for all}\quad c>0.
 \end{equation}
\end{itemize}
\end{theorem}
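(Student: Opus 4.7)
The starting point is the sandwich
$\COMP(\calA(\calC_d),\omega_{\textup{lo}} \varepsilon,R) \le n(\varepsilon/R,d) \le \COMP(\calA(\calC_d),\omega_{\textup{hi}} \varepsilon,R)$
already displayed just above the theorem, which, since $\omega_{\textup{lo}}$ and $\omega_{\textup{hi}}$ depend only on $A$, lets us replace $\COMP$ by $n(t,d)$ throughout, with $t=\varepsilon/R$. When $\rho'=\infty$, $n(t,d)=\#\{i : \lambda_{d,\bsk_i}>t\}$ is simply the counting function of the decreasing sequence $(\lambda_{d,\bsk_i})_i$, so the theorem becomes an analogue of the classical tractability characterizations for worst-case linear problems via singular values and the overall structure of the argument mirrors those proofs (cf.\ \cite{NovWoz08a}).

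For Part~1, monotonicity of the $\lambda_{d,\bsk_i}$ gives the chain $(i-i_0+1)\lambda_{d,\bsk_i}^\eta\le\sum_{j=i_0}^{i}\lambda_{d,\bsk_j}^\eta$, from which hypothesis \eqref{DHKM:eq:condspt} yields $\lambda_{d,\bsk_i}\le(M/(i-i_0+1))^{1/\eta}$ uniformly in $d$, and hence $n(t,d)\le Mt^{-\eta}+i_0-1$; this delivers SPT with exponent $\le\eta$. Conversely, SPT with $\COMP\le CR^p\varepsilon^{-p}$ forces $n(t,d)\le C't^{-p}$ for $t\le t_{\max}:=\varepsilon_{\max}/R_{\min}$, and monotonicity gives $\lambda_{d,\bsk_i}\le(C'/i)^{1/p}$ for $i\ge C't_{\max}^{-p}$; then $\sum_{i\ge i_0}\lambda_{d,\bsk_i}^\eta$ is dominated by $\sum i^{-\eta/p}$ for any $\eta>p$, giving \eqref{DHKM:eq:condspt} with $i_0=\lceil C't_{\max}^{-p}\rceil+1$. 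The two halves match exponents, so $p^{\ast}=\inf\{\eta>0:\eqref{DHKM:eq:condspt}\text{ holds}\}$. Part~2 is the same argument with a $d$-dependent threshold: sufficiency produces $n(t,d)\le Md^{\eta_1}t^{-\eta_3}+Kd^{\eta_2}$ and, for $t\le t_{\max}$, absorbs the additive term into $t^{-\eta_3}$, giving PT with $p=\eta_3$ and $q=\max(\eta_1,\eta_2)$; necessity derives $\lambda_{d,\bsk_i}\le(Cd^q/i)^{1/p}$ for $i\gtrsim d^q$, and summing with power $\eta>p$ yields the stated condition.

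For Part~3 the key tool is the Markov-type inequality $\ind[\lambda>t]\le\exp(c/t-c/\lambda)$, valid for every $c>0$, which gives
\[
n(t,d)\le\exp(c/t)\sum_{i}\exp(-c/\lambda_{d,\bsk_i}).
\]
If \eqref{DHKM:eq:condwt} holds, then for each $c>0$ this yields $\log n(t,d)\le O(1)+c(d+1/t)$, so $\log n/(d+R/\varepsilon)\le c+o(1)$ and WT follows upon sending $c\to 0$ after the limit. For the converse, WT supplies, for each $c'>0$, a constant $M_{c'}$ with $n(t,d)\le M_{c'}\exp(c'(d+1/t))$, and the layer-cake identity
\[
\sum_{i}\exp(-c/\lambda_{d,\bsk_i})=\int_{0}^{\infty}\frac{c}{t^{2}}\exp(-c/t)\,n(t,d)\,dt
\]
combined with any choice $0<c'<c$ makes the $t$-integral finite (a substitution $u=(c-c')/t$ reduces it to a constant multiple of $\int_0^\infty e^{-u}\,du$), while the remaining factor $\exp(c'd)$ is absorbed by the prefactor $\exp(-cd)$ in \eqref{DHKM:eq:condwt} since $c'<c$.

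The main obstacle is the weak tractability case: the Markov-type bound has to be applied with carefully chosen constants, the implication WT$\Rightarrow$\eqref{DHKM:eq:condwt} requires the converse slack $c'<c$ to be strict so that both the layer-cake integral and the outer exponential prefactor are controlled uniformly in $d$, and one must verify that the small-$(d+1/t)$ regime contributes only a harmless constant. A secondary bookkeeping point in Part~2 is that $(\eta_1,\eta_2,\eta_3)$ and $(p,q)$ are not matched one-to-one, so one takes $p=\eta_3$, $q=\max(\eta_1,\eta_2)$ in the sufficient direction and works with the natural $\eta_1=\eta_2=q$, $\eta_3>p$ in the necessary direction.
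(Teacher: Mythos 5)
Your proof is correct, but it takes a genuinely different route from the paper. The paper's proof of this theorem is essentially a reduction-plus-citation: after the substitution $\widetilde{\varepsilon}=\sqrt{\varepsilon/R}$, the quantity $n(\varepsilon/R,d)=\min\{n\ge 0:\lambda_{d,\bsk_{n+1}}\le\widetilde{\varepsilon}^{\,2}\}$ is recognized as the standard worst-case information complexity expressed through ordered singular values, and the three characterizations are imported wholesale from \cite[Chapter 5]{NovWoz08a} (for strong polynomial and polynomial tractability) and \cite{WerWoz17} (for weak tractability), with a remark about the resulting factor-of-two shift in exponents. You instead prove all three equivalences from scratch: the Chebyshev-type bound $(i-i_0+1)\lambda_{d,\bsk_i}^{\eta}\le\sum_{j\ge i_0}\lambda_{d,\bsk_j}^{\eta}$ and its converse via $n(t,d)\le C't^{-p}$ for Parts 1--2, and the Markov-type bound $\ind[\lambda>t]\le\exp(c/t-c/\lambda)$ together with the layer-cake identity $\sum_i\exp(-c/\lambda_{d,\bsk_i})=\int_0^\infty ct^{-2}\exp(-c/t)\,n(t,d)\,dt$ for Part 3; these are, in substance, the arguments underlying the cited references, and your handling of the details (the strict slack $c'<c$, the bounded small-$(d+R/\varepsilon)$ regime, the exponent matching $p^*=\inf\eta$) is sound. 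What the paper's approach buys is brevity and an explicit bridge to the established IBC literature; what yours buys is a self-contained proof that makes the constants and the (unsquared) normalization of $\varepsilon/R$ transparent without having to translate exponents between conventions.
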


\begin{proof}
Letting $\widetilde{\varepsilon}:=\sqrt{\varepsilon/R}$, we see that 
$n(\varepsilon/R,d)=\min \left \{n\ge 0\colon \lambda_{d,\bsk_{n+1}} \,\le \widetilde{\varepsilon}^{\,2} \right \}$. The latter expression is well studied in the context of tractability of linear problems in the worst-case setting defined on unit balls of certain spaces, and if and only if conditions on the $\lambda_{d,\bsk_i}$ for various tractability notions are known. These conditions can be found in \cite[Chapter 5]{NovWoz08a} for (strong) polynomial tractability and  \cite{WerWoz17} for weak tractability.

Since, in this chapter, we consider $\min \left \{n\ge 0\colon \lambda_{d,\bsk_{n+1}} \,\le \varepsilon/R \right \}$, and in \cite{NovWoz08a} and \cite{WerWoz17} $\varepsilon /R$ is replaced by the square of the error tolerance,
there are slight differences between the results here and those in the aforementioned references; to be more precise, the exponent of strong polynomial tractability is $\eta$ here, whereas it is $2\eta$ in \cite{NovWoz08a}, and $1/\lambda_{d,\bsk_i}$ in \eqref{DHKM:eq:condwt} corresponds to $1/\sqrt{\lambda_{d,\bsk_i}}$ in \cite{WerWoz17}. 
\end{proof}

\paragraph*{CASE 2: $\rho'<\infty$:} 

In this case, letting $\widetilde{\varepsilon}:=(\varepsilon/R)^{\rho'/2}$ and 
$\widetilde{\lambda}_{d,i}:=\lambda_{d,\bsk_i}^{\rho'}$, we have 
\begin{align}
    \nonumber 
    n(\varepsilon/R,d) &=\min \left \{n\ge 0\colon 
\sum_{i=n+1}^\infty \lambda_{d,\bsk_i}^{\rho'}\,
    \le (\varepsilon/R)^{\rho'} \right \} \\
    \label{DHKM:eqaveragetract}
    & =\min \left \{n\ge 0\colon 
\sum_{i=n+1}^\infty \widetilde{\lambda}_{d,i}\,
    \le \widetilde{\varepsilon}^{\,2} \right \}.
\end{align}
However, the latter expression corresponds exactly to the 
average-case tractability (with respect to the parameters 
$\widetilde{\lambda}_{d,i}$ and $\widetilde{\varepsilon}$) defined 
on certain spaces as studied in, e.g., \cite{NovWoz08a}. 
This leads us to the following theorem.
\begin{theorem} \label{DHKM:thmtract2}
Using the same notation as above, the following statements hold for the case $\rho'<\infty$.
 \begin{itemize}
  \item[1.] 
  We have strong polynomial tractability if and only if there exist $\eta\in (0,1)$ and $i_0\in\NN$ such that
 \begin{equation}\label{DHKM:eq:condspt1}
    \sup_{d\in\NN} \sum_{i=i_0}^\infty \lambda_{d,\bsk_i}^{\rho'\,\eta} < \infty.
 \end{equation}
 Furthermore, the exponent of strong polynomial tractability is then 
 \[
 \inf\left\{\rho'\eta/(1-\eta)\colon \mbox{$\eta$ satisfies \eqref{DHKM:eq:condspt1}}\right\}.
 \]
 \item[2.] 
  We have polynomial tractability if and only if there exist $\eta_1, \eta_2 \ge 0$ and $\eta_3\in (0,1), K>0$ such that
 \[
    \sup_{d\in\NN} d^{-\eta_1}\, \sum_{i=\lceil K d^{\eta_2} \rceil}^\infty \lambda_{d,\bsk_i}^{\rho'\,\eta_3} < \infty.
 \]
 \item[3.] Let $t_{d,i}:=\sum_{k=i}^\infty \lambda_{d,\bsk_i}$.
 We have weak tractability if and only if 
 \[
   \lim_{i\to\infty} t_{d,i}\, (\log i)^2=0\quad\mbox{for all $d$},
 \]
 and there exists a function $f:[0,1/2)\to \{1,2,3,\ldots\}$ such that
\[
  \sup_{\beta\in (0,1/2]}\, \beta^{-2} \,
  \sup_{d\ge f(\beta)}\,\, \sup_{i\ge \lceil \exp (d\sqrt{b}) \rceil +1}\, \, \lim_{i\to\infty} t_{d,i}\, (\log i)^2
  < \infty.
\]
 \end{itemize}
\end{theorem}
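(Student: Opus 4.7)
The plan is to reduce Theorem \ref{DHKM:thmtract2} to classical tractability characterizations in the average-case setting, via the reformulation already highlighted in \eqref{DHKM:eqaveragetract}, and then to carefully translate the conditions and, most delicately, the exponent back in terms of the original $\lambda_{d,\bsk_i}$ and $\varepsilon$.

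First, I would combine \eqref{DHKM:PilotConeAlg_cost} and \eqref{DHKM:PilotConeAlg_comp}, which already tell us that
\[
\COMP(\calA(\calC_d),\omega_{\lo}\varepsilon,R) \le n(\varepsilon/R,d) \le \COMP(\calA(\calC_d),\omega_{\up}\varepsilon,R),
\]
with $\omega_{\lo},\omega_{\up}$ independent of $d,\varepsilon,R$. Hence each of the three tractability notions holds for $\COMP(\calA(\calC_d),\varepsilon,R)$ (with its dependence on $\varepsilon$ and $R$ absorbed into the single variable $\varepsilon/R$) if and only if the corresponding statement holds for $n(\varepsilon/R,d)$ viewed as a function of $(\varepsilon/R,d)$. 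Using the substitution $\tilde\varepsilon = (\varepsilon/R)^{\rho'/2}$ and $\tlambda_{d,i} = \lambda_{d,\bsk_i}^{\rho'}$, \eqref{DHKM:eqaveragetract} yields
\[
n(\varepsilon/R,d) = \min\Bigl\{n\ge 0 : \sum_{i=n+1}^\infty \tlambda_{d,i} \le \tilde\varepsilon^{\,2}\Bigr\},
\]
which is exactly the information complexity of the canonical linear problem in the average-case setting with eigenvalues $\tlambda_{d,i}$ and target accuracy $\tilde\varepsilon$.

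Next I would invoke the known average-case tractability characterizations from \cite{NovWoz08a} (Chapter~6 there, mirroring the worst-case Chapter~5 used in the proof of Theorem~\ref{DHKM:thmtract1}) together with the average-case weak tractability result of \cite{WerWoz17}. In the average-case setting, strong polynomial tractability is equivalent to the existence of $\eta \in (0,1)$ and $i_0$ with $\sup_d \sum_{i\ge i_0}\tlambda_{d,i}^{\,\eta} < \infty$; rewriting $\tlambda_{d,i}^{\,\eta} = \lambda_{d,\bsk_i}^{\rho'\eta}$ yields \eqref{DHKM:eq:condspt1}. Polynomial tractability is characterized in the same reference by a tail condition of the form $\sup_d d^{-\eta_1}\sum_{i\ge \lceil K d^{\eta_2}\rceil} \tlambda_{d,i}^{\,\eta_3} < \infty$ with $\eta_3 \in (0,1)$, which translates directly to statement~2. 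The weak tractability statement~3 is obtained by copying the characterization in \cite{WerWoz17} and substituting back $\tlambda_{d,i} = \lambda_{d,\bsk_i}^{\rho'}$ in the partial sums $t_{d,i}$.

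The one non-routine step is tracking the exponent of strong polynomial tractability, which I would do as follows. The average-case result gives $n(\varepsilon/R,d) \le C\, \tilde\varepsilon^{-2\eta/(1-\eta)}$ for any $\eta$ satisfying \eqref{DHKM:eq:condspt1}, and no smaller exponent works. Substituting $\tilde\varepsilon = (\varepsilon/R)^{\rho'/2}$ gives $n(\varepsilon/R,d) \le C\, (R/\varepsilon)^{\rho'\eta/(1-\eta)}$, so the exponent of strong polynomial tractability for our problem is $\inf\{\rho'\eta/(1-\eta) : \eta \text{ satisfies } \eqref{DHKM:eq:condspt1}\}$, matching the stated formula. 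The bracketing of $\COMP$ by $n(\varepsilon/R,d)$ with $d,\varepsilon,R$-independent constants ensures that these exponents are preserved when passing from $n$ to $\COMP$. I expect the main obstacle to be bookkeeping: one has to make sure that the parameter identifications are consistent throughout (particularly in the weak-tractability statement, where the quantity $t_{d,i}$ mixes $\lambda_{d,\bsk_i}$ with $\rho'$), and that the ranges $\varepsilon \le \varepsilon_{\max}$, $R \ge R_{\min}$ required by our definitions do not disturb the ``for all $\varepsilon>0$'' formulation used in the classical average-case references.
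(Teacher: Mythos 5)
Your proposal is correct and follows exactly the route the paper takes: the paper's own proof is a one-line reduction via \eqref{DHKM:eqaveragetract} to the average-case tractability characterizations of \cite{NovWoz08a} and \cite{WerWoz17}, mirroring the proof of Theorem \ref{DHKM:thmtract1}. Your version simply spells out the substitution $\tilde\varepsilon=(\varepsilon/R)^{\rho'/2}$, $\tilde\lambda_{d,i}=\lambda_{d,\bsk_i}^{\rho'}$ and the resulting exponent bookkeeping ($2\eta/(1-\eta)$ in $\tilde\varepsilon$ becoming $\rho'\eta/(1-\eta)$ in $\varepsilon/R$) in more detail than the paper does.
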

\begin{proof}
  The proof of the theorem is similar to that of Theorem \ref{DHKM:thmtract1}, using \eqref{DHKM:eqaveragetract}.
\end{proof}
\begin{remark}
  Results for further tractability notions, such as quasi-polynomial tractability or $(s,t)$-weak 
  tractability, can be shown using similar arguments as above and results from \cite{KriWoz19}, \cite{NovWoz10a}, \cite{WerWoz17}, and the papers cited therein. 
\end{remark}

To be more concrete, we consider the situation where the $\lambda_{\bsk}$ are specified in terms of positive \emph{coordinate weights}, $w_1, \ldots, w_d$, and positive \emph{smoothness weights}, $s_1, s_2, \ldots$:
\begin{equation}
    \label{DHKM:prodwts}
\lambda_{d,\bsk}  := \prod_{\substack{\ell =1\\ k_\ell > 0}}^d w_\ell s_{k_\ell}, \qquad \bsk \in \NN_0^d, \ d \in \NN.
\end{equation}
This is a generalization of the example in Section \ref{DHKM:secexamp}, where $s_{k} = k^{-r}$.  This form of the $\lambda_{d,\bsk}$ is considered in greater detail in Section \ref{DHKM:smoothimportance_sec}.
The same argument as in Section \ref{DHKM:revisexamp} implies that the sum of the $\lambda_{d,\bsk_i}^\eta$ is bounded above as
\[
\sum_{i=1}^{\infty}\lambda_{d, \bsk_{i}}^{\eta} \le \exp\left(\sum_{k=1}^\infty s_k^\eta \sum_{\ell = 1}^{d} w_\ell^{\eta} \right)\le \exp\left(\sum_{k =1 }^{\infty} s_k^\eta \sum_{\ell = 1}^{\infty} w_\ell^{\eta} \right). 
\]
Moreover, it also follows that for any fixed positive integer $i_0$, the sum of the $\lambda_{d,\bsk_i}^\eta$ is bounded below as
\begin{align*}
    \sup_{d \in \N} \, \sum_{i=i_0}^{\infty}\lambda_{d, \bsk_{i}}^{\eta} & \ge w_1^\eta \sum_{k=i_0}^{\infty}s_{\kappa_i}^{\eta} \qquad \parbox{6cm}{considering only $\bsk_i$ of the form $(k, 0, 0, \ldots, 0)$, \\
    and ordering $\bss$ so that $s_{\kappa_1} \ge s_{\kappa_2} \ge \cdots$,} \\
    \sup_{d \in \N} \, \sum_{i=i_0}^{\infty}\lambda_{d, \bsk_{i}}^{\eta} & \ge  s_{1}^{\eta} \sum_{i=i_0}^{\infty} w_{\ell_i}^\eta
    \qquad %\text{for } d \ge i_0 
    \\
    & \qquad \parbox{8cm}{considering only $\bsk_i$ of the form  $(0, \ldots, 0,1,0 , \ldots, 0)$,\\ 
    where the non-zero component is at the $\ell$-th  position with \\
    $i_0\le \ell \le d$, and ordering $\bsw$ so that $w_{\ell_1} \ge w_{\ell_2} \ge \cdots$.}
\end{align*}
Thus, we have necessary and sufficient conditions for strong tractability.
\begin{corollary} \label{DHKM:sptexample_cor}
For the $\lambda_{d,\bsk}$ of the form \eqref{DHKM:prodwts} we have strong polynomial tractability if and only if there exists $\eta>0$ such that
 \begin{equation*}
%\label{DHKM:sptexample}
    \sum_{k=1}^\infty s_{k}^\eta < \infty \text{ and } 
    \sum_{\ell = 1}^{\infty} w_\ell^{\eta} < \infty.
\end{equation*}
\end{corollary}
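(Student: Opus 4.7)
The plan is to translate the tractability characterizations of Theorem \ref{DHKM:thmtract1} (when $\rho'=\infty$) and Theorem \ref{DHKM:thmtract2} (when $\rho'<\infty$) into conditions on the one-dimensional weight sequences $(s_k)_{k\ge 1}$ and $(w_\ell)_{\ell\ge 1}$, exploiting the product form \eqref{DHKM:prodwts}. The two technical ingredients needed for this translation are exactly the estimates displayed in the paragraphs just above the corollary: the uniform upper bound
\[
\sup_{d\in\NN}\sum_{i=1}^{\infty}\lambda_{d,\bsk_i}^{\eta}\;\le\;\exp\!\left(\sum_{k=1}^{\infty}s_k^{\eta}\sum_{\ell=1}^{\infty}w_\ell^{\eta}\right),
\]
together with the two lower bounds obtained by restricting attention to multi-indices of the form $(k,0,\ldots,0)$ and $(0,\ldots,0,1,0,\ldots,0)$, which yield (after re-ordering) constants times $\sum_k s_k^{\eta}$ and $\sum_\ell w_\ell^{\eta}$ respectively.

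For the ``if'' direction, I would fix $\eta>0$ with $\sum_k s_k^{\eta}<\infty$ and $\sum_\ell w_\ell^{\eta}<\infty$. The upper bound above is then finite and independent of $d$. When $\rho'=\infty$ this directly verifies hypothesis \eqref{DHKM:eq:condspt} of Theorem \ref{DHKM:thmtract1}. When $\rho'<\infty$, one writes the exponent as $\rho'\widetilde\eta$; since both $s_k$ and $w_\ell$ tend to $0$ as soon as a summable exponent exists, one may first enlarge $\eta$ without losing summability so as to bring $\widetilde\eta:=\eta/\rho'$ into $(0,1)$, at which point hypothesis \eqref{DHKM:eq:condspt1} of Theorem \ref{DHKM:thmtract2} is met. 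Strong polynomial tractability follows in either case.

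For the ``only if'' direction, strong polynomial tractability together with Theorem \ref{DHKM:thmtract1} or Theorem \ref{DHKM:thmtract2} supplies some $\eta>0$ (absorbing the factor $\rho'$ if $\rho'<\infty$) and some $i_0\in\NN$ such that $\sup_d\sum_{i\ge i_0}\lambda_{d,\bsk_i}^{\eta}<\infty$. Feeding in the two lower bounds forces $w_1^{\eta}\sum_k s_k^{\eta}$ and $s_1^{\eta}\sum_\ell w_\ell^{\eta}$ each to be finite, and since $w_1,s_1$ are positive this yields the two summability conditions claimed in the corollary. Truncating the finite initial segment $i<i_0$ does not affect convergence.

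The only non-trivial point is the bookkeeping of the exponent in the passage between the corollary statement (``some $\eta>0$'') and Theorem \ref{DHKM:thmtract2} (which requires its exponent to lie in $(0,1)$ after the substitution \eqref{DHKM:eqaveragetract}). This is handled via the relation $\eta\leftrightarrow\rho'\widetilde\eta$ and the monotonicity of summability in the exponent once the underlying sequences decay to zero; no step requires more than an appeal to bounds already in the paper.
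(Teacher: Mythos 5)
Your proposal follows the paper's own route exactly: the uniform upper bound via the product structure and $\log(1+x)\le x$, the two lower bounds obtained from the single-coordinate wavenumbers $(k,0,\ldots,0)$ and $(0,\ldots,0,1,0,\ldots,0)$, and an appeal to Theorems \ref{DHKM:thmtract1} and \ref{DHKM:thmtract2}; the paper itself gives no more detail than this. One detail in your $\rho'<\infty$ bookkeeping is backwards, though: condition \eqref{DHKM:eq:condspt1} requires the effective exponent $\rho'\eta$ with $\eta\in(0,1)$, i.e.\ an exponent \emph{strictly below} $\rho'$ at which both series converge, and \emph{enlarging} the corollary's exponent pushes $\eta/\rho'$ away from $(0,1)$ rather than into it --- summability of $\sum_k s_k^\eta$ and $\sum_\ell w_\ell^\eta$ is preserved when the exponent is increased (since the terms tend to zero), not when it is decreased, so the reduction only goes through when the summable exponent can already be chosen below $\rho'$. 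This gap is inherited from the paper, whose one-line justification does not engage with the point either and whose argument is airtight only for $\rho'=\infty$ (the case of the illustrative example); but your explicit claim that one may "enlarge $\eta$" to land in the required range is the one step that, as written, does not work.
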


\begin{remark}
Note that in the setting of this example, the term 
$\sum_{i=1}^{\infty}\lambda_{d, \bsk_{i}}^{\eta}$ will usually depend exponentially on $d$ unless the 
coordinate weights decay to zero fast enough with increasing $\ell$. Hence, we can only hope for tractability under the presence of decaying $w_\ell$. For further details on weighted approximation
problems and tractability, we refer to \cite{NovWoz08a}.
\end{remark}

%%%%%%%%%%%%%%%%%%%%%%%%%%%%%%%%%%%%%%%%%%%%%%%%%%%%%%%%%%%%%%%%%%%%%%%%%%%%%%%%%%%%%%%%%%%%%%%%
%%%%%%%%%%%%%%%%%%%%%%%%%%%%%%%%%%%%%%%%%%%%%%%%%%%%%%%%%%%%%%%%%%%%%%%%%%%%%%%%%%%%%%%%%%%%%%%%
%%%%%%%%%%%%%%%%%%%%%%%%%%%%%%%%%%%%%%%%%%%%%%%%%%%%%%%%%%%%%%%%%%%%%%%%%%%%%%%%%%%%%%%%%%%%%%%%
\section{Tracking the Decay Rate of the Series Coefficients of the Input Function}
\label{DHKM:tracking_sec} 
%%%%%%%%%%%%%%%%%%%%%%%%%%%%%%%%%%%%%%%%%%%%%%%%%%%%%%%%%%%%%%%%%%%%%%%%%%%%%%%%%%%%%%%%%%%%%%%%
%%%%%%%%%%%%%%%%%%%%%%%%%%%%%%%%%%%%%%%%%%%%%%%%%%%%%%%%%%%%%%%%%%%%%%%%%%%%%%%%%%%%%%%%%%%%%%%%
%%%%%%%%%%%%%%%%%%%%%%%%%%%%%%%%%%%%%%%%%%%%%%%%%%%%%%%%%%%%%%%%%%%%%%%%%%%%%%%%%%%%%%%%%%%%%%%%

From error bound \eqref{DHKM:APP_Err_Coef} it follows that the faster the $\hf(\bsk_i)$ decay, the faster $\APP(f,n)$ converges to the solution.  Unfortunately, adaptive Algorithm \ref{DHKM:PilotConeAlg} does not adapt to the decay rate of the $\hf(\bsk_i)$ as $i \to \infty$. It simply bounds $\norm[\calF]{f}$ based on a pilot sample.  The algorithm presented in this section tracks the rate of decay of the $\hf(\bsk_i)$ and terminates sooner if the $\hf(\bsk_i)$ decay more quickly.  Similar algorithms for quasi-Monte Carlo integration are developed in \cite{HicJim16a}, \cite{JimHic16a}, and \cite{HicEtal17a}.

There is an implicit assumption in this section that function data are cheap and we can afford a large sample size.  A large sample size is required to do meaningful tracking of the decay of the series coefficients.  The previous section and the next section are more suited to the case when function data are expensive and the final sample size must be modest.

Let $(n_j)_{j\ge 0}$ be a strictly increasing sequence of non-negative integers.  This sequence may increase geometrically or algebraically. Define the sets of wavenumbers analogously to \eqref{DHKM:KOnedef},
\begin{equation*}
   n_{-1}=0, \qquad \calK_j := \{\bsk_{n_{j-1}+1}, \ldots, \bsk_{n_j}\} \quad \text{for } j \in \N_0.
\end{equation*}
If $n_0 = 0$, then $\calK_0$ is empty.  For any $f \in \calF$, define the norms of subsets of series coefficients:
\begin{equation} \label{DHKM:SigmaDef}
\sigma_j (f):=\norm[\rho]{\biggl(\frac{\hf(\bsk)}{\lambda_{\bsk}} \biggr)_{\bsk \in \calK_j}} \qquad \text{for } j \in \N.
\end{equation}
Thus, $\norm[\calF]{f} = \norm[\rho]{\bigl(\sigma_j(f) \bigr)_{j \in \N_0}}$. 

For this section, we define the cone of input functions by
\begin{equation} \label{DHKM:TrackConeDef}
  \calC : =\left\{f\in\calF \colon \sigma_{j+r} (f)\le ab^r \sigma_j (f)\ \forall j,r\in\NN\right\}.
\end{equation}
Here, $a$ and $b$ are positive reals with $b< 1 < a$. The constant $a$ is an inflation factor, and the constant $b$ defines the general rate of decay of the $\sigma_j(f)$ for $f \in \calC$. Because $ab^r$ may be greater than one, we do not require the series coefficients of the solution, $\SOL(f)$, to decay monotonically. However, we expect their partial sums to decay steadily.  The series coefficients for  wavenumbers $\bsk \in \calK_0$ do not affect the definition of $\calC$ and may behave erratically.
Lemma \ref{DHKM:Key_Lem} implies that 
\begin{equation} \label{DHKM:LambdaDef}
    \norm[\tau]{\bigl(\hf(\bsk) \bigr)_{\bsk \in \calK_j}} \le \sigma_j(f) \Lambda_j, \qquad \text{where } \Lambda_j : = \norm[\rho']{\bigl(\lambda_{\bsk} \bigr)_{\bsk \in \calK_j}}.
\end{equation}
From \eqref{DHKM:SOLNorm} and \eqref{DHKM:SOLNormFinite} it follows that the norm of the solution operator is 
\begin{equation} \label{DHKM:NormLambdaFinite}
    \norm[\calF \to \calG]{\SOL} = \bignorm[\rho']{\bigl(\Lambda_j \bigr)_{j \in \N_0}} < \infty
\end{equation}

If $f$ belongs to the $\calC$ defined in \eqref{DHKM:TrackConeDef} and $n_0 = 0$, then 
\begin{align*}
    \norm[\calF]{f} & = \norm[\rho]{\Biggl(\,\norm[\rho]{\biggl(\frac{\hf(\bsk)}{\lambda_{\bsk}} \biggr)_{\bsk \in \calK_j}}\,\Biggr)_{j \in \N}} = \bignorm[\rho]{\bigl(\sigma_j(f) \bigr)_{j \in \N}} \\
    & \le \bignorm[\rho]{\bigl(\sigma_1(f), ab \sigma_1(f), ab^2 \sigma_1(f), \ldots \bigr)} \\
    & = \left(1 + \frac{a^\rho b^\rho}{1 - b^\rho} \right)^{1/\rho}  \norm[\rho]{\biggl(\frac{\hf(\bsk)}{\lambda_{\bsk}} \biggr)_{\bsk \in \calK_1}}.
\end{align*}
Comparing this inequality to the definition of $\calC$ in the previous section, it can be seen that  $\calC$ defined in \eqref{DHKM:TrackConeDef} is a subset of  $\calC$ defined in \eqref{DHKM:pilot_cone} if we choose 
$A=\left(1 + \frac{a^\rho b^\rho}{1 - b^\rho} \right)^{1/\rho}$ in \eqref{DHKM:pilot_cone}.

From the expression for the error in \eqref{DHKM:APP_Err_Coef} and the definition of the cone in  \eqref{DHKM:TrackConeDef}, we can now derive a data-driven error bound for all $f \in \calC$ and $j \in \bbN$: 
\begin{align}
\nonumber
\MoveEqLeft{\norm[\calG]{\SOL(f)-\APP(f,n_j)}} \\
\nonumber &= \norm[\tau]{\left(\hf(\bsk_i) \right)_{i = n_j+1}^\infty}
= \norm[\tau]{ \left(\norm[\tau]{\bigl(\hf(\bsk) \bigr)_{\bsk \in \calK_l}} \right)_{l=j+1}^\infty}
\\
\nonumber
& \le \norm[\tau]{ \bigl(\sigma_l(f) \Lambda_l \bigr)_{l=j+1}^\infty} \qquad \text{by \eqref{DHKM:LambdaDef}} \\
\nonumber 
&
= \norm[\tau]{ \bigl(\sigma_{j+r}(f) \Lambda_{j+r} \bigr)_{r=1}^\infty}
\\
& \le a \sigma_j(f) \norm[\tau]{ \bigl(b^r\Lambda_{j+r} \bigr)_{r=1}^\infty} =:\ERRNj
 \qquad \text{by \eqref{DHKM:TrackConeDef}.}
 \label{DHKM:algoineq}
\end{align}

This upper bound depends only on the function data and the parameters defining $\calC$.  The error vanishes as $j \to \infty$ because $\sigma_j(f) \le ab^{j-1} \sigma_1(f) \to 0$ and $\Lambda_j \to 0$.  Moreover, the error bound for $\APP(f,n_j)$ depends on $\sigma_j(f)$, whose rate of decay need not be postulated in advance.

These assumptions accommodate both the cases where the approximation converges algebraically and exponentially.  To illustrate the algebraic case, suppose that $\hf(\bsk_i)/\lambda_{\bsk_i} = \calO(i^{-r_\Delta})$ for some positive $r_\Delta > 1/\rho$.  For this algebraic case one would normally define $\calC$ in terms of an exponentially increasing sequence, $(n_j)_{j\ge 0}$, e.g., $n_j = n_0 2^j$, which implies that 
\begin{align*}
    \sigma_j(f) &= \left[ \sum_{i=n_0 2^{j-1} + 1}^{n_0 2^j} \biggabs{\frac{\hf(\bsk_i)}{\lambda_{\bsk_i}}}^\rho \right]^{1/\rho}
    = \calO \left( \left [ \sum_{i=n_0 2^{j-1} + 1}^{n_0 2^j} i^{-\rho r_\Delta} \right]^{1/\rho} \right) \\
    & = \calO \left(  2^{-j(r_\Delta-1/\rho)} \right).
\end{align*}
Reasonable functions would satisfy 
\begin{equation*}
    C_{\lo} 2^{-j(r_\Delta-1/\rho)} \le \sigma_j(f) \le C_{\up} 2^{-j(r_\Delta-1/\rho)} 
\end{equation*}
for some constants $C_{\lo}$ and $C_{\up}$.  Choosing $a \ge C_{\up}/C_{\lo} $  and $b \ge  2^{-(r_\Delta-1/\rho)}$ causes the cone $\calC$ to include such functions.  Note that only the ratio of $C_{\up}$ to $C_{\lo}$ need be assumed to determine $a$, and choosing $b$ larger than necessary does not affect the order of the decay of the error bound. 

To illustrate the exponential case, suppose that $\hf(\bsk_i)/\lambda_{\bsk_i} = \calO(\E^{-r_\Delta i})$.  For this exponential case one would normally define $\calC$ in terms of an arithmetic sequence, $(n_j)_{j\ge 0}$, e.g., $n_j = n_0 + j s$, where $s$ is a positive integer.  This implies that 
\begin{align*}
    \sigma_j(f) &= \left[ \sum_{i=n_0 + j s -s + 1}^{n_0 + j s} \biggabs{\frac{\hf(\bsk_i)}{\lambda_{\bsk_i}}}^\rho \right]^{1/\rho}
    = \calO \left( \left [ \sum_{i=n_0 + j s -s + 1}^{n_0 + j s} \E^{- \rho r_\Delta i} \right]^{1/\rho} \right) \\
    & = \calO \left(  \E^{-j r_{\Delta} s} \right).
\end{align*}
Analogous to the algebraic case, reasonable functions would satisfy 
    $C_{\lo} \E^{-j r_{\Delta} s} \le \sigma_j(f) \le C_{\up} \E^{-j r_{\Delta} s}$
for some constants $C_{\lo}$ and $C_{\up}$.  Choosing $a \ge C_{\up}/C_{\lo} $  and $b \ge \E^{- r_{\Delta} s}$ causes the cone $\calC$ to include such functions.  Again, only the ratio of $C_{\up}$ to $C_{\lo}$ need be assumed to determine $a$, and choosing $b$ larger than necessary does not affect the order of the decay of the error bound.

%%%%%%%%%%%%%%%%%%%%%%%%%%%%%%%%%%%%%%%%%%%%%%%%%%%%%%%%%%%%%%%%%%%%%%%%%%%%%%%%%%%%%%%%%%%%%%%%
\subsection{The Adaptive Algorithm and Its Computational Cost} \label{DHKM:SecAdapAlgTrackDecay}
%%%%%%%%%%%%%%%%%%%%%%%%%%%%%%%%%%%%%%%%%%%%%%%%%%%%%%%%%%%%%%%%%%%%%%%%%%%%%%%%%%%%%%%%%%%%%%%%

The data-driven error bound in \eqref{DHKM:algoineq} forms the basis for an adaptive Algorithm \ref{DHKM:TrackConeAlg}, which solves our problem for input functions in the cone $\calC$ defined in \eqref{DHKM:TrackConeDef}.  The following theorem establishes its viability and computational cost. In deriving upper bounds on the computational cost and lower bounds on the complexity, we may sacrifice tightness for simplicity.

\begin{algorithm}
	\caption{Adaptive ALG for a Cone of Input Functions Tracking the Series Coefficient Decay Rate \label{DHKM:TrackConeAlg}}
	\begin{algorithmic}
	\PARAM the Banach spaces $\calF$ and $\calG$, including the weights $\bslambda$; a strictly increasing sequence of non-negative integers, $(n_j)_{j\ge 0}$; an inflation factor, $a$; the general decay rate, $b$; $\APP$ satisfying \eqref{DHKM:APP_Err_Coef}
		\INPUT a black-box function, $f$; an absolute error tolerance,
		$\varepsilon>0$

\Ensure Error criterion \eqref{DHKM:err_crit} for  the cone defined in \eqref{DHKM:TrackConeDef}

\State Let $j \leftarrow 0$
\Repeat

\State Let $j \leftarrow j + 1$

\State Compute $\ERRNj$ as defined in \eqref{DHKM:algoineq}

\Until $\ERRNj \le \varepsilon$

\RETURN $\ALG(f,\varepsilon) = \APP(f,n_{j})$
\end{algorithmic}
\end{algorithm}

\begin{theorem}\label{DHKM:TractConeCompCost}
Algorithm \ref{DHKM:TrackConeAlg} yields an answer satisfying absolute error criterion \eqref{DHKM:err_crit}, i.e., $\ALG \in \calA(\calC)$ for $\calC$ defined in \eqref{DHKM:TrackConeDef}.  The information cost is $\COST(\ALG,f,\varepsilon)=n_{j^*}$, where $j^*$ is defined implicitly as
\begin{equation} \label{DHKM:EqTractConejstar}
j^* = \min\left \{ j \in \bbN : \ERRNj \le \varepsilon  \right\}.
\end{equation}
Moreover, $\COST(\ALG,\calC,\varepsilon,R) \le n_{j^\dagger}$, where $j^\dagger$ is defined as follows:
\begin{equation} \label{DHKM:TractConejdagger}
j^\dagger = \min \left \{j \in \bbN :   \norm[\tau]{ \bigl(b^{j+r}\Lambda_{j+r} \bigr)_{r=1}^\infty}
\le  \frac{b\varepsilon}{Ra^2} \left( \frac{1 - b^{j\rho}}{1 - b^\rho} \right)^{1/\rho} \right\}.
\end{equation}
\end{theorem}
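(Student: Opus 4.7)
The three assertions can be handled in turn; success and the individual-input cost follow immediately from the way Algorithm \ref{DHKM:TrackConeAlg} is assembled, while the uniform cost bound requires controlling $\sigma_j(f)$ over $\calC \cap \calB_R$ by a geometric-series argument. For the success claim, the data-driven bound \eqref{DHKM:algoineq} gives $\norm[\calG]{\SOL(f)-\APP(f,n_j)} \le \ERRNj$ for every $f \in \calC$ and $j \ge 1$. Since the loop exits at the first $j$ with $\ERRNj \le \varepsilon$, the returned value satisfies \eqref{DHKM:err_crit}. The formula $\COST(\ALG,f,\varepsilon) = n_{j^*}$ with $j^*$ as in \eqref{DHKM:EqTractConejstar} then reads off from the description of the loop.

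For the uniform bound $\COST(\ALG,\calC,\varepsilon,R) \le n_{j^\dagger}$, the plan is to bound $\sigma_j(f)$ from above for each $f \in \calC \cap \calB_R$. The cone condition \eqref{DHKM:TrackConeDef} yields $\sigma_j(f) \le a b^{j-l}\sigma_l(f)$ for all $1 \le l \le j$ (the case $l = j$ being trivial because $a > 1$), which rearranges to $\sigma_l(f)^\rho \ge \sigma_j(f)^\rho/(a^\rho b^{\rho(j-l)})$. Summing over $l = 1,\dots,j$ and using $\sum_{l\ge 1}\sigma_l(f)^\rho \le \norm[\calF]{f}^\rho \le R^\rho$ gives
\begin{equation*}
R^\rho \;\ge\; \frac{\sigma_j(f)^\rho}{a^\rho}\sum_{m=0}^{j-1}b^{-\rho m} \;=\; \frac{\sigma_j(f)^\rho}{a^\rho}\cdot\frac{1-b^{\rho j}}{b^{\rho(j-1)}(1-b^\rho)},
\end{equation*}
so $\sigma_j(f) \le a R b^{j-1}\bigl((1-b^\rho)/(1-b^{\rho j})\bigr)^{1/\rho}$. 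Factoring $\norm[\tau]{(b^r\Lambda_{j+r})_{r\ge 1}} = b^{-j}\norm[\tau]{(b^{j+r}\Lambda_{j+r})_{r\ge 1}}$ and substituting into \eqref{DHKM:algoineq} produces
\begin{equation*}
\ERRNj \;\le\; \frac{a^2 R}{b}\left(\frac{1-b^\rho}{1-b^{\rho j}}\right)^{\!1/\rho}\norm[\tau]{\bigl(b^{j+r}\Lambda_{j+r}\bigr)_{r=1}^\infty}.
\end{equation*}
The right-hand side is at most $\varepsilon$ exactly when the inequality in \eqref{DHKM:TractConejdagger} holds, so $j^*(f) \le j^\dagger$ for every $f \in \calC \cap \calB_R$, which is the desired uniform bound.

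The main obstacle I anticipate is matching the precise factor $(1-b^{\rho j})/(1-b^\rho)$ in \eqref{DHKM:TractConejdagger}. The naive estimate $\sigma_j(f) \le a b^{j-1}\sigma_1(f) \le a b^{j-1} R$ is too loose by exactly this factor; the sharper bound above uses the entire family of cone inequalities $\sigma_j(f) \le a b^{j-l}\sigma_l(f)$ for $l = 1,\dots,j$ simultaneously, together with the full norm identity $\norm[\calF]{f}^\rho = \sum_l \sigma_l(f)^\rho$. Once this telescoping-plus-summing step is executed, the rest is bookkeeping with the re-indexed tail norm.
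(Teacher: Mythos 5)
Your proposal is correct and follows essentially the same route as the paper: the success and per-input cost claims are read off from \eqref{DHKM:algoineq} and the stopping rule, and the uniform bound comes from using the full family of cone inequalities $\sigma_l(f)\ge a^{-1}b^{l-j}\sigma_j(f)$ for $l=1,\dots,j$ together with $\sum_l\sigma_l(f)^\rho\le R^\rho$ to get $\sigma_j(f)\le aRb^{j-1}\bigl((1-b^\rho)/(1-b^{\rho j})\bigr)^{1/\rho}$, exactly as the paper does by bounding $\norm[\rho]{(\sigma_l(f))_{l=1}^{j^\dagger}}$ from below by the geometric vector. The only cosmetic difference is that you bound $\ERRNj$ for general $j$ and then invoke the definition of $j^\dagger$, while the paper runs the chain of inequalities directly at $j=j^\dagger$.
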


\begin{proof}
The value of $j^*$ in \eqref{DHKM:EqTractConejstar} follows directly from the error criterion. The success of the algorithm follows from the error bound in \eqref{DHKM:algoineq}.

For the remainder of the proof consider $R$ and $\varepsilon$ to be fixed.  For any $f \in  \calC \cap \calB_R$ and for any $j^\dagger$ defined  as in \eqref{DHKM:TractConejdagger}, it follows that
\begin{align*}
R &\ge \norm[\calF]{f} = \norm[\rho]{\left(\frac{\hf(\bsk)}{\lambda_{\bsk}} \right)_{\bsk \in \bbK}}
 \ge \norm[\rho]{\left(\sigma_j(f)\right)_{j=1}^{j^\dagger}}  
 \qquad \text{by \eqref{DHKM:SigmaDef} } \\
& \ge \norm[\rho]{\left(a^{-1}b^{1-j^\dagger}\sigma_{j^\dagger}(f), \ldots, a^{-1} b^{-1}\sigma_{j^\dagger}(f), \sigma_{j^\dagger}(f) \right) } \quad \text{by  \eqref{DHKM:TrackConeDef}}\\
& \ge \frac {\sigma_{j^\dagger}(f)} a \norm[\rho]{\bigl(b^{1-j^\dagger}, \ldots, b^{-1}, 1 \bigr) } 
= \frac {b\sigma_{j^\dagger}(f)} a \left( \frac{b^{-j^\dagger\rho} -1}{1 - b^\rho} \right)^{1/\rho}\\
& \ge \sigma_{j^\dagger}(f)\,\frac{Ra}{\varepsilon} \norm[\tau]{ \bigl(b^r\Lambda_{j^\dagger+r} \bigr)_{r=1}^\infty}
\qquad \text{by the definition of } j^\dagger \text{ in \eqref{DHKM:TractConejdagger}} \\
& = \frac{R}{\varepsilon} \ERRNjd\, .
\end{align*}
From this last inequality, it follows that $j^\dagger \ge j^*$.
\end{proof} \

Although Algorithm \ref{DHKM:TrackConeAlg} tracks the decay rate of the $\hf(\bsk_i)$, the information cost bound and complexity bound in the theorem above do not reflect different decay rates of the $\hf(\bsk_i)$. That is a subject for future investigation.

%%%%%%%%%%%%%%%%%%%%%%%%%%%%%%%%%%%%%%%%%%%%%%%%%%%%%%%%%%%%%%%%%%%%%%%%%%%%%%%%%%%%%%%%%%%%%%%%
\subsection{Essential Optimality of the Algorithm}
%%%%%%%%%%%%%%%%%%%%%%%%%%%%%%%%%%%%%%%%%%%%%%%%%%%%%%%%%%%%%%%%%%%%%%%%%%%%%%%%%%%%%%%%%%%%%%%%

To establish the essential optimality of Algorithm \ref{DHKM:TrackConeAlg} requires some additional, reasonable assumptions on the sequences $(n_j)_{j \in \N_0}$ and $\bigl(\sigma_j(f) \bigr)_{j \in \N_0}$.  Recall from \eqref{DHKM:NormLambdaFinite} that $\bigl(\Lambda_j \bigr)_{j \in \N_0}$ has a finite $\rho'$ norm.  We require that the $\Lambda_j$ must decay steadily with $j$:
\begin{equation} \label{DHKM:LambdaDecayCond}
    \alpha^{-1} \beta^r \Lambda_j \le \Lambda_{j+r} \le \alpha \gamma^r \Lambda_j  \quad \forall j,r \in \N_0, \qquad \text{for some } \beta, \gamma < 1 \le \alpha.
\end{equation}
We also assume that the ratio of the largest to smallest $\lambda_{\bsk}$ in a group is bounded above:
\begin{equation} \label{DHKM:MinMaxCond}
    \sup_{j \in \N} \frac{\lambda_{\bsk_{n_{j-1}+1}}}{\lambda_{\bsk_{n_{j}}}} \le S_1 < \infty.
\end{equation}
For the illustrative choices of $(n_j)_{j \in \N_0}$ and $\bigl(\bsk_i \bigr)_{i \in \N}$ preceding Section \ref{DHKM:SecAdapAlgTrackDecay} this assumption holds.  Let $\card(\cdot)$ denote the cardinality of a set. We assume that if $\calJ$ is an arbitrary set of wavenumbers with $\card(\calJ) \le n_j$, then there exists some $l \le n_{j+1}$ for which $\calK_l \setminus \calJ$ retains some significant fraction of the original $\calK_l$ elements: 
\begin{equation} \label{DHKM:PropCond}
     \inf_{j \in \N} \ \min_{\calJ \subset \bbK \, : \, \card(\calJ) \le n_j} \ \max_{0 \le l \le j+1} \frac{\card(\calK_l \setminus \calJ)}{\card(\calK_l)} \ge S_2 > 0.
\end{equation}
Again, for the illustrative choices of $(n_j)_{j \in \N_0}$ and $\bigl(\bsk_i \bigr)_{i \in \N}$ preceding Section \ref{DHKM:SecAdapAlgTrackDecay} this assumption holds.

The following theorem establishes a lower bound on the complexity of our problem for input functions in $\calC$. The theorem after that shows that the cost of our algorithm as given in Theorem \ref{DHKM:TractConeCompCost} is essentially no worse than this lower bound.

\begin{theorem} \label{DHKM:TractConeLowBdComp}
A lower bound on the complexity of the linear problem is
\begin{align*}
 %\label{compbdA}
&\COMP(\calA(\calC),\varepsilon,R) > n_{j^\ddagger}, 
\intertext{where}
%\label{compbdB}
j^\ddagger & = \max \left \{ j \in \bbN :  b^{j+1} \Lambda_{j+1}    > 
 \frac{2a\alpha \varepsilon}{R(a-1)(1 - b^\rho)^{1/\rho}}  \left[1 + \left(\frac 1 {S_2} -1 \right) S_1^\rho \right]^{1/\rho}
\right \}.
\end{align*}
\end{theorem}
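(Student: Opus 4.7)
The plan is to adapt the fooling-function dichotomy from the proof of Theorem~\ref{DHKM:PilotCostThm}. Fix any $\ALG' \in \calA(\calC)$ and admissible $\varepsilon, R$. Build a cone-saturating background $f_1 \in \calC \cap \calB_R$ with geometric profile $\sigma_j(f_1) = cb^{j-1}$ on every $j \in \NN$, choosing $c$ so that $\norm[\calF]{f_1} = c/(1 - b^\rho)^{1/\rho}$ is strictly below $R$ to leave room for a perturbation. Let $\calJ$ be the set of wavenumbers $\ALG'$ samples on input $f_1$, and set $n := \card(\calJ)$. Assume for contradiction $n \le n_{j^\ddagger}$. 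Condition~\eqref{DHKM:PropCond} at $j = j^\ddagger$ then supplies an index $l \in \{0, 1, \ldots, j^\ddagger+1\}$ with $\card(\calK_l \setminus \calJ) \ge S_2 \card(\calK_l)$; write $\calK_l' := \calK_l \setminus \calJ$ and $\Lambda_l' := \norm[\rho']{(\lambda_\bsk)_{\bsk \in \calK_l'}}$.

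Place $f_1$ so that its level-$l$ support avoids $\calK_l'$, and construct $f_2$ supported on $\calK_l'$ with coefficients chosen extremally via Lemma~\ref{DHKM:Key_Lem}, so that $\sigma_l(f_2) = \delta$ and $\norm[\calG]{\SOL(f_2)} = \delta \Lambda_l'$. Setting $f_\pm := f_1 \pm f_2$, disjoint supports at every level give $\sigma_j(f_\pm)^\rho = \sigma_j(f_1)^\rho + \sigma_j(f_2)^\rho$, so $\norm[\calF]{f_\pm}^\rho = c^\rho/(1-b^\rho) + \delta^\rho$. The only nontrivial cone inequality---the one with $j + r = l$---reduces to $\delta \le (a^\rho - 1)^{1/\rho}\, c\, b^{l-1}$. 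Combining this with the budget $\norm[\calF]{f_\pm} \le R$, using the same $(1 + 1/a)/2$, $(1 - 1/a)/2$ split of the $\norm[\calF]{\cdot}^\rho$-budget between $f_1$ and $f_2$ as in Theorem~\ref{DHKM:PilotCostThm}, yields a legal choice with $\delta$ of order $R(a-1)(1-b^\rho)^{1/\rho}b^{l-1}/(2a)$.

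Because $f_2$ vanishes on $\calJ$, the adaptive algorithm makes identical queries on $f_+$, $f_-$, and $f_1$, so $\ALG'(f_+, \varepsilon) = \ALG'(f_-, \varepsilon)$. The triangle-inequality splitting used in Theorem~\ref{DHKM:PilotCostThm} then yields $\max_\pm \norm[\calG]{\SOL(f_\pm) - \ALG'(f_\pm, \varepsilon)} \ge \norm[\calG]{\SOL(f_2)} = \delta \Lambda_l'$, forcing $\delta \Lambda_l' \le \varepsilon$ by the error criterion. Now I bound $\Lambda_l'$ from below using condition~\eqref{DHKM:MinMaxCond} together with $\card(\calK_l')/\card(\calK_l) \ge S_2$: splitting the sum defining $\Lambda_l$ into the portions over $\calK_l'$ and $\calK_l \cap \calJ$ and bounding each excluded summand via the minimum of $\lambda_\bsk$ on $\calK_l$ gives $\Lambda_l \le [1 + (1/S_2 - 1)S_1^\rho]^{1/\rho} \Lambda_l'$. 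Condition~\eqref{DHKM:LambdaDecayCond} together with $l \le j^\ddagger + 1$ gives $\Lambda_l \ge \Lambda_{j^\ddagger+1}/\alpha$, and $b < 1$ together with $l - 1 \le j^\ddagger$ gives $b^{l-1} \ge b^{j^\ddagger}$. Chaining these into $\varepsilon \ge \delta \Lambda_l'$ produces an upper bound on $b^{j^\ddagger+1}\Lambda_{j^\ddagger+1}$ that contradicts its defining inequality.

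The main technical obstacle is verifying $f_\pm \in \calC$ across the infinite ladder $\sigma_{j+r}(f_\pm) \le ab^r \sigma_j(f_\pm)$, not just at the binding pair with $j+r = l$. The forward inequalities at $j = l$ are automatic from $\sigma_l(f_\pm) \ge \sigma_l(f_1) = cb^{l-1}$; the inequalities avoiding level $l$ are inherited directly from the cone-saturating geometric profile of $f_1$; only the mixed-level inequalities produce the binding constraint on $\delta$. A secondary obstacle is tracking the $(1 \pm 1/a)/2$ norm split cleanly together with the $\alpha$, $S_1$, $S_2$ dependence through the $\rho$-dependent H\"older estimates in order to match the precise constant $2a\alpha/[(a-1)(1-b^\rho)^{1/\rho}]\cdot[1 + (1/S_2-1)S_1^\rho]^{1/\rho}$ appearing in the theorem.
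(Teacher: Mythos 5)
Your overall strategy matches the paper's: a fooling pair $f_\pm=f_1\pm f_2$ with $f_2$ vanishing on the sampled set $\calJ$, the standard averaging argument giving $\varepsilon\ge\norm[\calG]{\SOL(f_2)}$, and the chain \eqref{DHKM:PropCond} $\to$ \eqref{DHKM:MinMaxCond} $\to$ \eqref{DHKM:LambdaDecayCond} to convert $\Lambda'_l$ into $b^{j^\ddagger+1}\Lambda_{j^\ddagger+1}$. The one genuine problem is the device ``place $f_1$ so that its level-$l$ support avoids $\calK_l'$.'' The index $l$ and the set $\calK_l'=\calK_l\setminus\calJ$ are only known after $\ALG'$ has been run on $f_1$, so you cannot build this into $f_1$ in advance; the only legitimate reading is that you modify $f_1$ a posteriori on wavenumbers outside $\calJ$ (which indeed leaves the algorithm's queries and output unchanged). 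But that modification zeroes out $f_1$ on $\calK_l\setminus\calJ$, and if $\ALG'$ happens to sample nothing in $\calK_l$ this forces $\sigma_l$ of the modified $f_1$ down to $0$. Your verification of the forward cone inequalities at $j=l$ explicitly relies on $\sigma_l(f_\pm)\ge\sigma_l(f_1)=cb^{l-1}$, which is exactly what the modification destroys: with $\sigma_l(f_1)$ reduced, the inequalities $\sigma_{l+r}(f_\pm)\le ab^r\sigma_l(f_\pm)$ impose a \emph{lower} bound $\delta\gtrsim cb^{l-1}/a$ on the perturbation, and for $a$ close to $1$ this is incompatible with the upper bound $\delta\le(a^\rho-1)^{1/\rho}cb^{l-1}$ you need for the reverse direction. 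So as written, $f_\pm\in\calC$ is not established.

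The repair is to drop the disjoint-support/Pythagorean device entirely, which is what the paper does: keep $f_1$ fully supported with $\sigma_j(f_1)=c_1b^j$, spread $f_2$ over \emph{all} the sets $\tcalK_j=\calK_j\setminus\calJ$ with $\sigma_j(f_2)=c_2b^j$, and verify the cone membership of $f_\pm$ using only the triangle inequalities $\sigma_j(f_1)-\sigma_j(f_2)\le\sigma_j(f_\pm)\le\sigma_j(f_1)+\sigma_j(f_2)$, which hold uniformly in $j$ with the $(1\pm 1/a)/2$ split $c_1+c_2=R(1-b^\rho)^{1/\rho}$, $c_1-c_2=R(1-b^\rho)^{1/\rho}/a$. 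The single-level extraction you perform up front then happens only at the very end, via $\norm[\tau]{\bigl(b^j\tLambda_j\bigr)_{j\in\N_0}}\ge b^{l}\tLambda_{l}$ for the level $l$ supplied by \eqref{DHKM:PropCond}. Your remaining steps (the $S_1,S_2$ bound relating $\Lambda_l$ to $\tLambda_l$, and the use of $\alpha$ and $l\le j^\ddagger+1$) are correct and identical to the paper's.
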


\begin{proof}

As in the proof of Theorem \ref{DHKM:PilotCostThm} we consider fixed and arbitrary R and $\varepsilon$.
We proceed by carefully constructing the test input functions, $f_1$ and $f_{\pm} = f_1 \pm f_2$, lying in $\calC \cap \calB_{R}$, which yield the same approximate solution but different true solutions.  This leads to a lower bound on $\COMP(\calA(\calC),\varepsilon,R)$. The proof is provided for $\rho' < \infty$.  The proof for $\rho' = \infty$ is similar.

The first test function $f_1 \in \calC$ is defined in terms of its series coefficients---inspired by Lemma \ref{DHKM:Key_Lem}---as
\begin{align}
\nonumber
f_1 &= f_{10} + f_{11} +  \cdots, \qquad
\hf_{1j}(\bsk) := \begin{cases}
\displaystyle
\frac{c_1 b^{j} \lambda_{\bsk}^{\rho'/\rho+1}}{\Lambda_j^{\rho'/\rho}},  & \bsk \in \calK_j,
\\
0, & \bsk \notin \calK_j,
\end{cases}
\\
\nonumber
c_1 &:=  \frac{R(a+1)(1 - b^\rho)^{1/\rho}}{2a}.
\end{align}
It can be verified that the test function lies both in $\calB_{R}$ and in $\calC$:
\begin{align}
\nonumber
\sigma_j(f_1) & = \norm[\rho]{\biggl(\frac{\hf_{1j}(\bsk)}{\lambda_{\bsk}} \biggr)_{\bsk \in \calK_l}} 
= c_1 b^j, \qquad j \in \N_0,\\
\nonumber
\norm[\calF]{f_1} &= \norm[\rho]{\bigl( \sigma_j(f) \bigr)_{j \in \N_0} } 
=  \frac{c_1}{(1 - b^\rho)^{1/\rho}} = \frac{R(a+1)}{2a} \le R,
\\
\nonumber
\sigma_{j+r}(f_1) &= 
b^{r} \sigma_j(f_1) \le a b^r \sigma_j(f_1), \qquad j,r \in \N_0.
\end{align}

Now let $\ALG'$ be an arbitrary algorithm in $\calA(\calC)$, and suppose that $\ALG'(f_1,\varepsilon)$ samples $f_1(\bsk)$ for $\bsk \in \calJ$.  Let $\tcalK_j = \calK_j \setminus \calJ$ for all non-negative integers $j$. Construct the function $f_2$, having zero coefficients for $\bsk \in \calJ$, but otherwise looking like $f_1$:
\begin{align}
\nonumber
f_2 &= f_{20} + f_{21} +  \cdots, \qquad \hf_{2j}(\bsk) := \begin{cases}
\displaystyle
\frac{c_2 b^{j} \lambda_{\bsk}^{\rho'/\rho+1}}{\tLambda_j^{\rho'/\rho}},  
& \bsk \in \tcalK_j,
\\
0, & \text{otherwise},
\end{cases}
\\
\nonumber
c_2 &:= \frac{R(a-1)(1 - b^\rho)^{1/\rho}}{2a}, \qquad
\tLambda_j := \norm[\rho']{\bigl(\lambda_\bsk \bigr)_{\bsk \in \tcalK_j}} \le \Lambda_j, \\
\nonumber
\sigma_j(f_2) & = \norm[\rho]{\biggl(\frac{\hf_{2j}(\bsk)}{\lambda_{\bsk}} \biggr)_{\bsk \in \tcalK_j}} 
= \begin{cases} c_2 b^j, & \tcalK_j \ne \emptyset, \\
0, & \tcalK_j = \emptyset, 
\end{cases}
\qquad j \in \N_0, \\
\nonumber 
\norm[\calF]{f_2} &= \norm[\rho]{\bigl( \sigma_j(f_{2}) \bigr)_{j \in \N_0} } 
\le \frac{c_2}{(1 - b^\rho)^{1/\rho}} = \frac{R(a - 1)}{2a}\le R, \\
\nonumber 
\norm[\calG]{\SOL(f_{2j})} &= \sigma_j(f_{2j}) \tLambda_j = 
c_2 b^{j} \tLambda_j, \qquad j \in \N_0, \\
\norm[\calG]{\SOL(f_2)} & = \norm[\tau]{\bigl(c_2 b^{j} \tLambda_j \bigr)_{j \in \N_0}}
= \frac{R(a-1)(1 - b^\rho)^{1/\rho}}{2a} \norm[\tau]{\bigl(b^{j} \tLambda_j \bigr)_{j \in \N_0}}.
\label{DHKM:SOLftwo}
\end{align}

Furthermore, define $f_{\pm} = f_1 \pm f_2$.
It can be verified that $f_{\pm}$ also lie both in $\calB_{R}$ and in $\calC$:
\begin{align}
\nonumber
\norm[\calF]{f_\pm} &\le \norm[\calF]{f_1} + \norm[\calF]{f_2} \le \frac{c_1 + c_2}{(1 - b^\rho)^{1/\rho}} = R,
\\
\nonumber
\sigma_j(f_\pm) & \ge \sigma_j(f_1) - \sigma_j(f_2) \ge
\left(c_1 - c_2 \right) b^{j} = \frac{R(1 - b^\rho)^{1/\rho}b^{j}}{a},  \qquad j \in \N_0,
\\
\nonumber
\sigma_{j+r}(f_\pm) & \le \sigma_{j+r}(f_1) + \sigma_{j+r}(f_2) \le 
(c_1+c_2) b^{j+r} 
\\
\nonumber
& =  R(1 - b^\rho)^{1/\rho}b^{j+r}
\le a b^r \sigma_j(f_\pm),  \qquad j \in \N_0.
\end{align}

Since $\hf_2(\bsk) = 0$ for $\bsk \in \calJ$, it follows that $\ALG'(f_\pm,\varepsilon) = \ALG'(f_1,\varepsilon)$.  But, even though the two test functions $f_\pm$ lead to the same approximate solution, they have different true solutions.  In particular,
\begin{align}
\nonumber
\varepsilon &\ge \max \bigl\{\norm[\calG]{\SOL(f_+) - \ALG'(f_+,\varepsilon)}, \norm[\calG]{\SOL(f_-) - \ALG'(f_-,\varepsilon)} \bigr\} \\
\nonumber
&\ge \frac 12 \bigl[\norm[\calG]{\SOL(f_+) - \ALG(f_1,\varepsilon)} + \norm[\calG]{\SOL(f_-) - \ALG'(f_1,\varepsilon)}  \bigr] \\
\nonumber
&\qquad \qquad \text{since } \ALG'(f_\pm,\varepsilon) = \ALG'(f_1,\varepsilon) \\
\nonumber
&\ge \frac 12 \norm[\calG]{\SOL(f_+) - \SOL(f_-)} \quad \text{by the triangle inequality}\\
\nonumber
&\ge \frac 12 \norm[\calG]{\SOL(f_+ - f_-)} \quad \text{since $\SOL$ is linear}\\
&= \norm[\calG]{\SOL(f_2)} 
= \frac{R(a-1)(1 - b^\rho)^{1/\rho}}{2a} \norm[\tau]{\bigl(b^{j} \tLambda_j \bigr)_{j \in \N_0}}
\qquad 
\text{by \eqref{DHKM:SOLftwo}.}
\label{DHKM:eps_LBA}
\end{align}

Suppose that $\card(\calJ) = \COST(\ALG',f_{\pm},\varepsilon) \le n_{j^{\star}}$.  Then by condition \eqref{DHKM:PropCond}, there exists an $l^\star \le j^\star+1$ where $\card(\tcalK_{l^\star}) \ge S_2 \card(\calK_{l^\star})$.  
This implies a lower bound on $\tLambda_{l^\star}$.  Let $m = n_{l^\star} - n_{l^\star-1} = \card(\calK_{l^\star})$.  Then, $m_{\inc} = \lceil S_2 m \rceil \ge S_2 m$ is a lower bound on  $\card(\tcalK_{l^\star})$, and $m_{\out} = m - m_{\inc} \le (1 - S_2) m$ is an upper bound on $\card(\calK_{l^\star} \setminus \tcalK_{l^\star})$.  Moreover, 

\begin{align*}
    \Lambda_{l^\star}^\rho & = \sum_{i \in \calK_{l^\star}} \lambda_{\bsk_i}^{\rho} =  \tLambda_{l^\star}^\rho + \sum_{i \in \calK_{l^\star} \setminus \tcalK_{l^\star}} \lambda_{\bsk_i}^{\rho}
    \\
    &\le  \tLambda_{l^\star}^\rho + m_{\out} \lambda_{\bsk_{n_{l^\star -1}+1}}^\rho \qquad \text{by the ordering of the } \lambda_{\bsk_i}\\
    &\le  \tLambda_{l^\star}^\rho + m_{\out} S_1^\rho \lambda_{\bsk_{n_{l^\star}}}^\rho \qquad \text{by \eqref{DHKM:MinMaxCond}}\\
    & \le \tLambda_{l^\star}^\rho + \frac{m_{\out}}{m_{\inc}} S_1^\rho \tLambda_{l^\star}^\rho \qquad \text{by the definition of } \tLambda_{l^\star}\\
    & \le \left[1 + \left(\frac 1 {S_2} -1 \right) S_1^\rho \right] \tLambda_{l^\star}^\rho \qquad \text{by the bounds on } m_{\inc} \text{ and } m_{\out} 
    \\
    & \le \left[1 + \left(\frac 1 {S_2} -1 \right) S_1^\rho \right] b^{-\rho l^\star} \norm[\tau]{\bigl(b^{j} \tLambda_j \bigr)_{j \in \N_0}}^\rho\, .
\end{align*}

Returning to \eqref{DHKM:eps_LBA}, the above inequality  implies that
\begin{equation*}
    \varepsilon 
\ge  \frac{R(a-1)(1 - b^\rho)^{1/\rho}}{2a} \left[1 + \left(\frac 1 {S_2} -1 \right) S_1^\rho \right]^{-1/\rho} b^{l^\star} \Lambda_{l^\star}.
\end{equation*}
Since $l^\star \le j^{\star}+1$ it follows that $ b^{l^\star} \ge b^{j^\star+1}$ and from condition \eqref{DHKM:LambdaDecayCond} it follows that $\Lambda_{l^\star} \ge \Lambda_{j^\star+1}/\alpha$.  Thus,
\begin{equation*}
    \varepsilon 
\ge  \frac{R(a-1)(1 - b^\rho)^{1/\rho}}{2a\alpha} \left[1 + \left(\frac 1 {S_2} -1 \right) S_1^\rho \right]^{-1/\rho} b^{j^\star+1} \Lambda_{j^\star+1}.
\end{equation*}
If any algorithm satisfies the error tolerance $\varepsilon$ for all input functions in $\calC \cap \calB_R$ and has information cost no greater than $n_{j^\star}$, then $j^\star$ must satisfy the above inequality.  By contrast, if the above inequality is violated for any $j^\star$, then the information cost of the successful algorithm must be greater than $n_{j^\star}$.  This completes the proof.
\end{proof}

\begin{theorem}
\label{DHKM:TrackConeAlgOptThm}
Adaptive Algorithm \ref{DHKM:TrackConeAlg} is essentially optimal for the cone of input functions defined in \eqref{DHKM:TrackConeDef}.
\end{theorem}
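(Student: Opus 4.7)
The plan is to combine the upper bound $\COST(\ALG,\calC,\varepsilon,R) \le n_{j^\dagger}$ from Theorem~\ref{DHKM:TractConeCompCost} with the lower bound $\COMP(\calA(\calC),\varepsilon,R) > n_{j^\ddagger}$ from Theorem~\ref{DHKM:TractConeLowBdComp} to exhibit a constant $\omega > 0$ together with thresholds $\varepsilon_{\max}, R_{\min} > 0$ for which
\[
\COST(\ALG,\calC,\varepsilon,R) \le \COMP(\calA(\calC),\omega\varepsilon,R) \qquad \forall\,\varepsilon \in (0,\varepsilon_{\max}],\ R \in [R_{\min},\infty).
\]
Since $(n_j)_{j \in \N_0}$ is strictly increasing, it suffices to prove the index inequality $j^\dagger(\varepsilon,R) \le j^\ddagger(\omega\varepsilon,R)$: then $n_{j^\dagger} \le n_{j^\ddagger} < \COMP(\omega\varepsilon,R)$ follows immediately.

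The main step is to reduce the $j^\dagger$-condition of \eqref{DHKM:TractConejdagger}, which involves the $\tau$-norm of the tail $(b^{j+r}\Lambda_{j+r})_{r \ge 1}$, to a condition on the single quantity $b^{j+1}\Lambda_{j+1}$ that governs $j^\ddagger$. Using the upper half of the decay assumption \eqref{DHKM:LambdaDecayCond}, namely $\Lambda_{j+r} \le \alpha\gamma^{r-1}\Lambda_{j+1}$, and summing a geometric series I will establish
\[
\norm[\tau]{\bigl(b^{j+r}\Lambda_{j+r}\bigr)_{r=1}^\infty} \le M\, b^{j+1}\Lambda_{j+1}, \qquad M := \alpha\bigl(1-(b\gamma)^\tau\bigr)^{-1/\tau}.
\]
Combined with the trivial bound $((1-b^{j\rho})/(1-b^\rho))^{1/\rho} \ge 1$ for $j \ge 1$, this reduces the defining inequality of $j^\dagger$ to the stronger sufficient condition $b^{j+1}\Lambda_{j+1} \le b\varepsilon/(MRa^2)$.

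The subtle step is that $j \mapsto b^{j+1}\Lambda_{j+1}$ need not be monotone, so the definition of $j^\ddagger$ as the \emph{largest} $j$ with $b^{j+1}\Lambda_{j+1} > C\omega\varepsilon/R$ (writing $C := \frac{2a\alpha}{(a-1)(1-b^\rho)^{1/\rho}}[1+(1/S_2-1)S_1^\rho]^{1/\rho}$ for the constant from Theorem~\ref{DHKM:TractConeLowBdComp}) only forces $b^{j^\ddagger+2}\Lambda_{j^\ddagger+2} \le C\omega\varepsilon/R$; it says nothing a priori about the term $b^{j^\ddagger+1}\Lambda_{j^\ddagger+1}$ itself. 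To bridge the gap I will invoke the lower half of \eqref{DHKM:LambdaDecayCond}, in the form $\Lambda_{j+1} \le \alpha\beta^{-1}\Lambda_{j+2}$, which delivers $b^{j^\ddagger+1}\Lambda_{j^\ddagger+1} \le (\alpha/(b\beta))\,b^{j^\ddagger+2}\Lambda_{j^\ddagger+2} \le \alpha C\omega\varepsilon/(b\beta R)$. Chaining this with the bound from the previous paragraph, the index $j^\ddagger$ satisfies the $j^\dagger$-condition as soon as $\omega \le b^2\beta/(MC\alpha a^2)$, which will be my choice.

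To finish, I must ensure that $j^\ddagger(\omega\varepsilon,R) \ge 1$ is legitimate, so that the bound $((1-b^{j^\ddagger\rho})/(1-b^\rho))^{1/\rho}\ge 1$ can be applied; this is arranged by choosing $\varepsilon_{\max}$ and $R_{\min}$ with $C\omega\varepsilon_{\max}/R_{\min} < b^2\Lambda_2$, which forces $j=1$ into the defining set of $j^\ddagger$. The main obstacle, and the reason the argument genuinely requires \eqref{DHKM:LambdaDecayCond}, is exactly this non-monotonicity: without two-sided quantitative control on how fast $\Lambda_j$ can decay or recover, one cannot convert the single-term bound available at $j^\ddagger+1$ into a bound on the tail $\tau$-norm appearing in the cost-side condition evaluated at $j^\ddagger$.
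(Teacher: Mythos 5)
Your proof is correct and follows essentially the same route as the paper's: both arguments hinge on using the two-sided decay condition \eqref{DHKM:LambdaDecayCond} to convert the tail $\tau$-norm appearing in the definition of $j^\dagger$ into the single quantity $b^{j+1}\Lambda_{j+1}$ that governs $j^\ddagger$, sum the resulting geometric series, and choose $\omega$ accordingly, with the choice of $\varepsilon_{\max}$ and $R_{\min}$ guaranteeing the relevant index is at least $1$ (resp.\ $2$ in the paper). The only difference is presentational: you verify directly that $j^\ddagger(\omega\varepsilon)$ lies in the defining set of $j^\dagger(\varepsilon)$, whereas the paper argues by contradiction from the $j^\dagger$ side, which is why your constant $\omega$ differs slightly from (and is a bit larger than) the paper's, but both are valid.
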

\begin{proof}
Let $j^\dagger(\varepsilon)$ be defined as in \eqref{DHKM:TractConejdagger}, with the $\varepsilon$ dependence made explicit.  Choose $\varepsilon_{\max}$ and $R_{\min}$ in \eqref{DHKM:EssentialOpt} such that $j^\dagger(\varepsilon) \ge 2$.  This definition implies that

\begin{align*}
    b^{j^\dagger(\varepsilon)} \Lambda_{j^\dagger(\varepsilon) } 
   & = \frac{[1 - (\gamma b)^\tau]^{1/\tau}}{\alpha}
   \norm[\tau]{ \bigl(b^{j^\dagger(\varepsilon)-1+r} \alpha \gamma^{r-1} \Lambda_{j^\dagger(\varepsilon)} \bigr)_{r=1}^\infty}
    \\
    & \ge \frac{[1 - (\gamma b)^\tau]^{1/\tau}}{\alpha}  \norm[\tau]{ \bigl(b^{j^\dagger(\varepsilon)-1+r}\Lambda_{j^\dagger(\varepsilon)-1+r} \bigr)_{r=1}^\infty} \qquad \text{by \eqref{DHKM:LambdaDecayCond}} 
    \\
    &
    > \frac{b [1 - (\gamma b)^\tau]^{1/\tau} \varepsilon}{Ra^2 \alpha} \left( \frac{1 - b^{\rho(j^\dagger(\varepsilon)-1)}}{1 - b^\rho} \right)^{1/\rho} \qquad \text{by \eqref{DHKM:TractConejdagger}}
    \\
    &
    \ge \frac{b [1 - (\gamma b)^\tau]^{1/\tau} \varepsilon}{Ra^2 \alpha} \qquad \text{since } j^\dagger(\varepsilon) \ge 2
    \\
    & =   \frac{\alpha^2}{(b\beta)^2} \times \frac{2a \alpha \omega \varepsilon}{R(a-1)(1 - b^\rho)^{1/\rho}}  \left[1 + \left(\frac 1 {S_2} -1 \right) S_1^\rho \right]^{1/\rho}
    \\
    \intertext{where }
   \omega  & = \frac{(a-1)b^3 \beta^2 (1 - b^\rho)^{1/\rho} [1 - (\gamma b)^\tau]^{1/\tau}}{2a^3 \alpha^4 } \left[1 + \left(\frac 1 {S_2} -1 \right) S_1^\rho \right]^{-1/\rho}.
\end{align*}

Making the $\varepsilon$ dependence explicit in the definition of $j^\ddagger(\varepsilon)$ in Theorem \ref{DHKM:TractConeLowBdComp} it follows from the above inequality that
\begin{equation*}
    b^{j^\dagger(\varepsilon)} \Lambda_{j^\dagger(\varepsilon) } 
     > \frac{\alpha^2}{(b\beta)^2} b^{j^\ddagger(\omega\varepsilon)+2} \Lambda_{j^\ddagger(\omega \varepsilon) + 2 } 
    \ge \alpha b^{j^\ddagger(\omega\varepsilon)} \Lambda_{j^\ddagger(\omega \varepsilon)} \qquad \text{by \eqref{DHKM:LambdaDecayCond}}.
\end{equation*}

If $j^{\dagger}(\varepsilon) \ge j^{\ddagger}(\omega \varepsilon)$, then \eqref{DHKM:LambdaDecayCond} implies that 
\[
 b^{j^\dagger(\varepsilon)} \Lambda_{j^\dagger(\varepsilon) } \le \alpha (\gamma b)^{j^{\dagger}(\varepsilon) - j^{\ddagger}(\omega \varepsilon)} b^{j^{\ddagger}(\omega \varepsilon) } \Lambda_{j^\ddagger(\omega \varepsilon) } \le \alpha b^{j^{\ddagger}(\omega \varepsilon) } \Lambda_{j^\ddagger(\omega \varepsilon) }.
\]
But, this contradicts the above inequality.  Thus, $j^\dagger(\varepsilon) < j^\ddagger(\omega \varepsilon)$, and so
\[
\COST(\ALG,\calC,\varepsilon,R) \le n_{j^\dagger(\varepsilon)} < n_{j^\ddagger(\omega \varepsilon)} < \COMP(\calA(\calC),\omega \varepsilon,R).
\]
Thus,  Algorithm \ref{DHKM:TrackConeAlg} is essentially optimal.
\end{proof}

%%%%%%%%%%%%%%%%%%%%%%%%%%%%%%%%%%%%%%%%%%%%%%%%%%%%%%%%%%%%%%%%%%%%%%%%%%%
\subsection{Tractability}\label{DHKM:SecDecayTract}
%%%%%%%%%%%%%%%%%%%%%%%%%%%%%%%%%%%%%%%%%%%%%%%%%%%%%%%%%%%%%%%%%%%%%%%%%%%

We again would like to study tractability. As it turns out, by using the relation 
between the cones defined in \eqref{DHKM:pilot_cone} and \eqref{DHKM:TrackConeDef}, respectively, we 
easily obtain sufficient conditions for tractability. 

\begin{theorem} \label{DHKM:thmtract3}
The respective conditions presented in Theorem \ref{DHKM:thmtract1} for the case where $\rho'=\infty$ and 
in Theorem \ref{DHKM:thmtract2} for the case where $\rho'<\infty$ are sufficient for strong polynomial, polynomial, and weak tractability of the approximation problem defined on cones as in \eqref{DHKM:TrackConeDef}.
\end{theorem}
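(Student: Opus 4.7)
The plan is to leverage the cone inclusion already established in the text: the cone $\calC$ defined in \eqref{DHKM:TrackConeDef} sits inside the pilot-sample cone \eqref{DHKM:pilot_cone} once we pick the inflation factor $A=\left(1+\frac{a^{\rho}b^{\rho}}{1-b^{\rho}}\right)^{1/\rho}$. Since this $A$ depends only on $a,b,\rho$ and is thus independent of the dimension $d$, the inclusion is uniform in $d$, which is exactly what we need for a tractability transfer.

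First, I would make the cone inclusion precise by writing $\calC_{d,\text{track}}\subseteq\calC_{d,\text{pilot}}$ for every $d\in\N$ with the same $A$. From this inclusion, any algorithm that satisfies error criterion \eqref{DHKM:err_crit} over $\calC_{d,\text{pilot}}$ a fortiori satisfies it over $\calC_{d,\text{track}}$, so $\calA(\calC_{d,\text{pilot}})\subseteq\calA(\calC_{d,\text{track}})$. Taking minima over these algorithm classes gives
\begin{equation*}
\COMP(\calA(\calC_{d,\text{track}}),\varepsilon,R)\le \COMP(\calA(\calC_{d,\text{pilot}}),\varepsilon,R)\qquad\forall d,\varepsilon,R.
\end{equation*}

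Next I would invoke Theorem \ref{DHKM:PilotCostThm} together with the discussion at the start of Section \ref{DHKM:SecPilotTract}, which shows that for the pilot cone
\begin{equation*}
\COMP(\calA(\calC_{d,\text{pilot}}),\omega_{\textup{lo}}\varepsilon,R)\le n(\varepsilon/R,d)\le \COMP(\calA(\calC_{d,\text{pilot}}),\omega_{\textup{hi}}\varepsilon,R),
\end{equation*}
with $\omega_{\textup{lo}},\omega_{\textup{hi}}>0$ depending on $A$ (hence on $a,b,\rho$) but not on $d$, $\varepsilon$, or $R$. Combining this with the inclusion bound above yields
\begin{equation*}
\COMP(\calA(\calC_{d,\text{track}}),\varepsilon,R)\le n(\omega_{\textup{lo}}^{-1}\varepsilon/R,d),
\end{equation*}
so the tractability behaviour of the tracking problem is controlled by the same quantity $n(\cdot,d)$ that governs the pilot setting, up to a $d$-independent rescaling of $\varepsilon$.

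Finally, I would translate each tractability notion. The rescaling $\varepsilon\mapsto \omega_{\textup{lo}}^{-1}\varepsilon$ changes any bound of the form $Cd^{q}R^{p}\varepsilon^{-p}$ by a constant factor only, so if the hypotheses of Theorem \ref{DHKM:thmtract1} (case $\rho'=\infty$) or Theorem \ref{DHKM:thmtract2} (case $\rho'<\infty$) hold, then the upper bound on $n(\omega_{\textup{lo}}^{-1}\varepsilon/R,d)$ supplied by those theorems yields the corresponding strong polynomial, polynomial, or weak tractability bound for $\COMP(\calA(\calC_{d,\text{track}}),\varepsilon,R)$. For weak tractability one additionally checks that $\log\COMP(\calA(\calC_{d,\text{track}}),\varepsilon,R)/(d+R\varepsilon^{-1})\to 0$ is preserved under the constant rescaling of $\varepsilon$, which is immediate.

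The only subtle point—and really the main obstacle—is verifying that the transfer constants $\omega_{\textup{lo}},\omega_{\textup{hi}}$ (and hence $A$) are genuinely uniform in $d$. This is where one must be careful: the cone inclusion relies on $\norm[\calF]{f}\le A\,\sigma_{1}(f)$ for all $f\in\calC_{d,\text{track}}$, and the derivation in the excerpt shows that $A$ is a function of $a,b,\rho$ alone. Once this is checked, the remainder of the argument is a routine inheritance of an upper bound, and no lower-bound analysis is needed, reflecting that Theorem \ref{DHKM:thmtract3} states only sufficiency.
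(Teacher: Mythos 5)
Your proposal is correct and follows exactly the paper's argument: the cone inclusion $\calC_{\text{track}}\subseteq\calC_{\text{pilot}}$ with $A=\bigl(1+\tfrac{a^{\rho}b^{\rho}}{1-b^{\rho}}\bigr)^{1/\rho}$ (uniform in $d$) gives $\COMP(\calA(\calC_{\text{track}}),\varepsilon,R)\le\COMP(\calA(\calC_{\text{pilot}}),\varepsilon,R)$, and the sufficient conditions of Theorems \ref{DHKM:thmtract1} and \ref{DHKM:thmtract2} transfer since a $d$-independent rescaling of $\varepsilon$ preserves each tractability notion. The paper's proof is only a two-sentence sketch of this; you have simply filled in the details (algorithm-class inclusion, the sandwich via $n(\varepsilon/R,d)$), and done so correctly.
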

\begin{proof}
As pointed out above, $\calC$ defined in \eqref{DHKM:TrackConeDef} is a subset of  $\calC$ defined in \eqref{DHKM:pilot_cone}, by choosing $A=\left(1 + \frac{a^\rho b^\rho}{1 - b^\rho} \right)^{1/\rho}$ in \eqref{DHKM:pilot_cone}. This means that the approximation problem on  $\calC$ defined in \eqref{DHKM:TrackConeDef} is essentially (i.e., up to constants depending on $A,a,b$ and $\rho$) no harder than the same problem on $\calC$ defined in \eqref{DHKM:pilot_cone}. This, however, implies that all sufficient conditions in Theorem \ref{DHKM:thmtract1} are also sufficient in the case considered in Theorem \ref{DHKM:thmtract3}.
\end{proof}

Theorem \ref{DHKM:thmtract3} yields sufficient conditions for the tractability notions considered 
here. A general result for necessary conditions seems to be more difficult to obtain and is left open for future research.

%%%%%%%%%%%%%%%%%%%%%%%%%%%%%%%%%%%%%%%%%%%%%%%%%%%%%%%%%%%%%%%%%%%%%%%%%%%%%%%%%%%%%%%%%%%%%%%%%%%%%%
%%%%%%%%%%%%%%%%%%%%%%%%%%%%%%%%%%%%%%%%%%%%%%%%%%%%%%%%%%%%%%%%%%%%%%%%%%%%%%%%%%%%%%%%%%%%%%%%%%%%%%
\section{Inferring Coordinate and Smoothness Importance} \label{DHKM:smoothimportance_sec}
%%%%%%%%%%%%%%%%%%%%%%%%%%%%%%%%%%%%%%%%%%%%%%%%%%%%%%%%%%%%%%%%%%%%%%%%%%%%%%%%%%%%%%%%%%%%%%%%%%%%%%
%%%%%%%%%%%%%%%%%%%%%%%%%%%%%%%%%%%%%%%%%%%%%%%%%%%%%%%%%%%%%%%%%%%%%%%%%%%%%%%%%%%%%%%%%%%%%%%%%%%%%%

In Sections \ref{DHKM:pilot_sec} and \ref{DHKM:tracking_sec}, the weights $\bslambda = (\lambda_{\bsk})_{\bsk \in \bbK}$, which appear in the definition of the cone of inputs, $\calC$, are taken as given and fixed.  One may assume the form suggested in \eqref{DHKM:prodwts}, which defines $\bslambda$ in terms of coordinate weights and smoothness weights.  However, practically speaking it may be difficult to know a priori the values of these weights.
This section explores a situation where the initial data collected for the input function data can be used to learn $\bslambda$, inferring which input variables in $f$ may be more important and the smoothness of the function.  

The motivation for this section is situations where the relative importance of the $d$ input variables of the function is not known from physical considerations. We also envision situations where the cost of function data is large, e.g., the result of an expensive computer simulation.  Thus, we are not concerned with the cost of the algorithm beyond the information cost, which we hope to limit to $\calO(d)$.

\subsection{Product, Order and Smoothness Dependent (POSD) Weights}
The $u_{\bsk}$ and the $\lambda_{\bsk}$ in this section are defined as 
\begin{equation}
u_\bsk = \prod_{\ell = 1}^d \tu_{k_\ell}, \quad 
\lambda_{\bsk} = \Gamma_{\|\bsk\|_0} \prod_{\substack{\ell=1\\ k_\ell>0}}^d w_\ell s_{k_\ell}, \quad \Gamma_0 = s_1 = 1, \quad \bsk \in \mathbb{N}_0^d,
\label{DHKM:posdeq}
\end{equation}
where $\bsw = (w_\ell)_{l=1}^d$ is the vector of coordinate weights, $\bss = (s_k)_{k=1}^\infty$ is the vector of smoothness weights,  $\boldsymbol{\Gamma} = (\Gamma_m)_{m=1}^d$ is the vector of \emph{order weights}, and $\norm[0]{\bsk}$ denotes the number of nonzero elements of $\bsk$. The intuition behind these weights is as follows: 
\begin{itemize}
    \item Coordinate weights quantify the importance for the $d$ input variables in $f$.
    \item Smoothness weights quantify the importance of the $\tu_k$.  E.g., if the $\tu_k$ are polynomials of degree $k$ as in Section \ref{DHKM:secexamp}, then the faster the $s_k$ decay, the smoother $f$ is.  
    \item Order weights quantify the importance of effects with different orders; $\bsk$ having one nonzero element corresponds to a first-order or main effect, $\bsk$ having two nonzero elements corresponds to a second-order (interaction) effect.   (e.g., first-order, second-order).
\end{itemize} \ \vspace{-4ex}

This parametrization is motivated by several guiding principles from the experimental design literature \cite{WuHam2009}, which are briefly described below.  In statistical parlance, the terms $\hf(\bsk) u_{\bsk}$ are effects.
\begin{itemize}
\item \emph{Effect sparsity} assumes that only a small number of inputs in $f$ are important. In \eqref{DHKM:posdeq}, this sparsity means that only a small number of product weights $\bsw$ are large.  This principle arises in the sufficient condition for strong tractability in Corollary \ref{DHKM:sptexample_cor}.
\item \emph{Effect heredity} assumes that lower-order effects are more important than higher-order effects. E.g., $\lambda_{(1,0, 0,\ldots, 0)}$ should be larger than $\lambda_{(1, 1, 0, \ldots, 0)}$. In \eqref{DHKM:posdeq}, this heredity can be enforced by assuming that the order weights $\Gamma_{m}$ decrease with $m$. 
\item \emph{Effect hierarchy} assumes that an effect is active \emph{only} when all its component effects are active. For example, $\lambda_{(1, 1, 0, \ldots, 0)} > 0$ only when $\lambda_{(1, 0, 0, \ldots, 0)}$ and $\lambda_{(0, 1, 0, \ldots, 0)}$ are both nonzero. This hierarchy is implicitly enforced by the product structure of the weights in \eqref{DHKM:posdeq}.
\item \emph{Effect smoothness} assumes that lower-degree effects are more important than higher-degree effects. For example, when the $(\tu_k)_{k \in \mathbb{N}_0}$ are polynomials, this means that linear effects are more important than quadratic effects, which are in turn more significant than cubic effects, and so on. Effect smoothness can be imposed by assuming $\bss$ to be a decreasing sequence. 
\end{itemize} \ \vspace{-4ex}

The $\lambda_{\bsk}$ defined in \eqref{DHKM:posdeq} are called product, order and smoothness dependent (POSD) weights.  From a quasi-Monte Carlo (QMC) perspective, the POSD weights in \eqref{DHKM:posdeq} generalize upon the product-and-order dependent (POD) weights in \cite{KuoEtal12a}, which were introduced for analyzing QMC methods in partial differential equations with random coefficients. The latter POD weights can be recovered by ignoring the smoothness weights. 

Our POSD weights differ from the smoothness-driven product-and-order dependent (SPOD) weights in \cite{Dea2014}, which were recently used to analyze higher-order QMC methods for stochastic partial differential equations. These SPOD weights take the form:
\begin{equation*}
\gamma_{\fraku} = \sum_{\bsk \in \{1, \ldots, \alpha\}^{|\fraku|}} \|\bsk\|_1 ! \prod_{\ell \in \fraku} \left( 2^{\delta(k_\ell,\alpha)} w_\ell^{k_\ell} \right), \quad \|\bsk\|_1 = \sum_{l=1}^d k_\ell, \quad \fraku \subseteq \{1, \ldots, d\},
%\label{eq:spod}
\end{equation*}
where $\delta (k_\ell,\alpha)$ is 1 if $k_\ell=\alpha$ and 0 otherwise. 
Intuitively, the SPOD weights quantify the importance of each \textit{subspace} (indexed by $\fraku$), under a common smoothness structure among subspaces (for further details on SPOD weights, we refer the reader to \cite{Dea2014}). In contrast, the proposed POSD weights in \eqref{DHKM:posdeq} instead quantify the importance of each \textit{Fourier series coefficient} $\hf(\bsk)$ (indexed by $\bsk$), under a common smoothness structure among coefficients.

\subsection{Inferring POSD Weights from an Initial Sample}

Let $\calC_{\bslambda}$ denote the cone of inputs defined in \eqref{DHKM:pilot_cone} by POSD weights $\bslambda = \bigl( \lambda_{\bsk} \bigr)_{\bsk \in \N_0^d}$.  As mentioned above, our goal here is to infer $\bslambda$ from input function data.  We start with an initial set of wavenumbers:
\begin{equation} \label{DHKM:barKdef}
    \bcalK= \{ (0, \ldots, 0, k, 0, \ldots, 0): k = 0, \ldots, k_{\max}\}.
\end{equation}
The approximation to $f$ based on sampling the series coefficients for these wavenumbers is
\begin{equation*}
    \fapp = \sum_{\bsk \in \bar{\calK}} \hf(\bsk) u_{\bsk}.
\end{equation*}
We choose the $\calC_{\bslambda}$ that best fits $f$ by selecting $\bslambda$ to make the norm of  $f_{\text{app}}$ small:
\begin{multline}
\bar{\bslambda} = \bslambda(\bar{\bsw}, \bar{\bss}, \bsGamma), \\
\text{where } (\bar{\bsw}, \bar{\bss} ) = \min \left\{ \argmin_{(\bsw,\bss) \in \calW \times \calS} \left\|\left( \frac{\hf(\bsk)}{\lambda_{\bsk}(\bsw,\bss,\bsGamma)} \right)_{\bsk \in \bar{\calK}}\right\|_{\rho} \right \}.
\label{DHKM:eq:inf}
\end{multline}
Here, $\calW$ is a candidate set for coordinate weights, e.g., $\calW = [0,w^*]^d$, and $\calS$ is a candidate set for the smoothness weights, e.g., $\calS = \{(1/k^r)_{k=1}^{\infty} \colon r > 0\}$. The inner minimization finds the $(\bsw,\bss)$ that minimizes the approximate norm of the input function.  This minimizer may be non-unique, so the outer minimization chooses the smallest such $(\bsw,\bss)$. Making the coordinate and smoothness weights as small as possible helps enforce the principles of effect sparsity.  The optimum, $(\bar{\bsw}, \bar{\bss})$, then defines the \emph{data-inferred} POSD $\bslambda$, denoted $\bar{\bslambda}$.

The candidate sets $\calW$ and $\calS$ should be constructed such that the coordinate and smoothness weights have a priori upper bounds. Otherwise the inner minimization would choose huge values for $\bsw$ and $\bss$ to maximize the $\lambda_{\bsk}(\bsw,\bss,\bsGamma)$ and minimize the norm of $\fapp$.  The cardinality of the initial set of wavenumbers is $d k_{\max} + 1$.  There is a trade-off between keeping $k_{\max}$ small enough to reducing cost and making $k_{\max}$ large enough to ensuring robustness.

For simplicity, we assume that order weights, $\bsGamma$, are fixed a priori.  If desired, they too could be inferred as the next step.  However, since we want to limit the size of the initial sample to $\calO(d)$ we must sample judiciously the higher order interactions.  

The optimization in \eqref{DHKM:eq:inf} is nontrivial to solve numerically. In practice, we iteratively optimize over $\bsw$ and then $\bss$ until convergence is reached.  At each step of the iteration $\|(\hat{f}(\bsk)/\lambda_{\bsk})_{\bsk \in \bcalK}\|_{\rho}$ decreases.

Algorithm \ref{DHKM:InfPilotConeAlg} combines the construction of data-inferred POSD weights, $\bar{\bslambda}$, with Algorithm \ref{DHKM:PilotConeAlg} of Section \ref{DHKM:pilot_sec}. This algorithm succeeds for input functions in the cone 
\begin{equation} \label{DHKM:pilot_cone2}
    \bar{\calC} := \left \{ f \in \calF : f \in \calC_{\bar{\bslambda}} \text{ for } \bar{\bslambda} \text{ defined in \eqref{DHKM:eq:inf}} \right\}.
\end{equation}
The reason that $\bar{\calC}$ is a cone is that the data-inferred $\bar{\bslambda}$ for the input function $f$ is exactly the same as for the input function $cf$, where $c$ is any constant.

\begin{algorithm}
	\caption{Adaptive $\ALG$ Based on Data-Inferred POSD Weights \label{DHKM:InfPilotConeAlg}} 
	\begin{algorithmic}
	\PARAM the bases $\{u_{\bsk}\}_{\bsk \in \N_0^d}$ and $\{v_{\bsk}\}_{\bsk \in \N_0^d}$; candidate sets $\calW$ and $\calS$; maximum smoothness degree, $k_{\max}$; order weights, $\boldsymbol{\Gamma}$; an inflation factor, $A > 1$; $\APP$ satisfying \eqref{DHKM:Refined_APP_err} 
		\INPUT a black-box function, $f$; an absolute error tolerance,
		$\varepsilon>0$

\Ensure Error criterion \eqref{DHKM:err_crit} for the cone defined in \eqref{DHKM:pilot_cone2}
\State  Define the initial set of  wavenumbers $\bcalK$ defined in \eqref{DHKM:barKdef}
\State Evaluate initial sample $\bigl\{\hf(\bsk)\bigr\}_{\bsk \in \bcalK}$
\State Compute data-driven POSD weights, $\bar{\bslambda}$ according to \eqref{DHKM:eq:inf}
\State Using these weights, $\bar{\bslambda}$, perform Algorithm \ref{DHKM:PilotConeAlg} to obtain $\ALG(f,\varepsilon)$
\RETURN $\ALG(f,\varepsilon)$
\end{algorithmic}
\end{algorithm}

\subsection{Numerical Examples} \label{DHKM:numexamp_sec}

We now investigate the numerical performance of this adaptive algorithm using data-inferred POSD weights. For simplicity, only the case of $\rho = \infty$ and $\rho' = \tau = 1$ is considered in the following examples. Here, the basis functions $(u_\bsk)_{\bsk \in \mathbb{N}_0^d}$ are Chebyshev polynomials in Section \ref{DHKM:secexamp}, and the solution operator is $\SOL (f) = f$ (i.e., function approximation). We note that $\|f - \ALG(f,\varepsilon)\|_\infty \le \|f - \ALG(f,\varepsilon)\|_\calG$, so our error criterion \eqref{DHKM:err_crit} implies that $\|f - \ALG(f,\varepsilon)\|_\infty \le \varepsilon$.

The simulation set-up is as follows. The Fourier coefficients for input function $f$, $\{\hat{f}(\bsk)\}_{\bsk \in \mathbb{N}_0^d}$, are randomly sampled as:
\begin{equation*}
\hat{f}(\bsk) = Z_{\bsk} \,  {\Gamma}_{\|\bsk\|_0}^{\rm tr} \prod_{\substack{\ell=1\\ k_\ell>0}}^d {w_\ell^{\rm tr}} {s}_{k_\ell}^{\rm tr}, \quad Z_{\bsk} \overset{i.i.d.}{\sim} \text{Unif}[-1, 1], \quad \bsk \in \mathbb{N}_0^d.
%\label{eq:foursim}
\end{equation*}
Here, $(w_\ell^{\rm tr})_{\ell=1}^d = (1/L^2(\ell))_{l=1}^d$, $({\Gamma_k^{\rm tr}})_{k=1}^\infty \equiv 1$ and $(s_j^{\rm tr})_{j=1}^{k_{\rm max}} = (1/j^4)_{j=1}^4$ are the true coordinate, order, and smoothness weights, and $Z_{\bsk}$ randomly sets the magnitude and sign of each coefficient. Moreover, $\bigl(L(\ell)\bigr)_{\ell=1}^d$ is a random permutation of $1, \ldots, d$ to ensure that the order of input variables does not necessarily reflect their order of importance. We also set $\boldsymbol{\Gamma} = \boldsymbol{\Gamma}^{\rm tr}$ in Algorithm \ref{DHKM:InfPilotConeAlg} and use an inflation factor of $A = 1.1$.

Figures \ref{fig:four} (a) and (b) display the total required sample size from Algorithm \ref{DHKM:InfPilotConeAlg}, as a function of the error to tolerance ratio, $\|f - \ALG(f,\varepsilon)\|_\infty/\varepsilon$, in $d=4$ and $d=7$ dimensions, respectively. Each data point corresponds to a different error tolerance $\varepsilon$. A ratio $\|f - \ALG(f,\varepsilon)\|_\infty/\varepsilon$ close to, but not exceeding, one is desired, since this shows that our adaptive algorithm is successful. For $d=4$, $\|f - \ALG(f,\varepsilon)\|_\infty/\varepsilon$ fluctuates around 0.4 for all choices of $\varepsilon$; for $d=7$, this ratio begins at $\approx 0.1$ for $\varepsilon = 0.1$, then decreases to $\approx 0.014$ for $\varepsilon = 0.001$. This shows that our adaptive approximation algorithm works reasonably well.  It appears slightly more effective in lower dimensions  than in higher dimensions. A likely reason is that the underlying POSD structure can be more easily learned from a small pilot sample in lower dimensions than in higher dimensions.

\begin{figure}
\centering
\begin{subfigure}{\textwidth}
\centering
\includegraphics[width=0.7\textwidth]{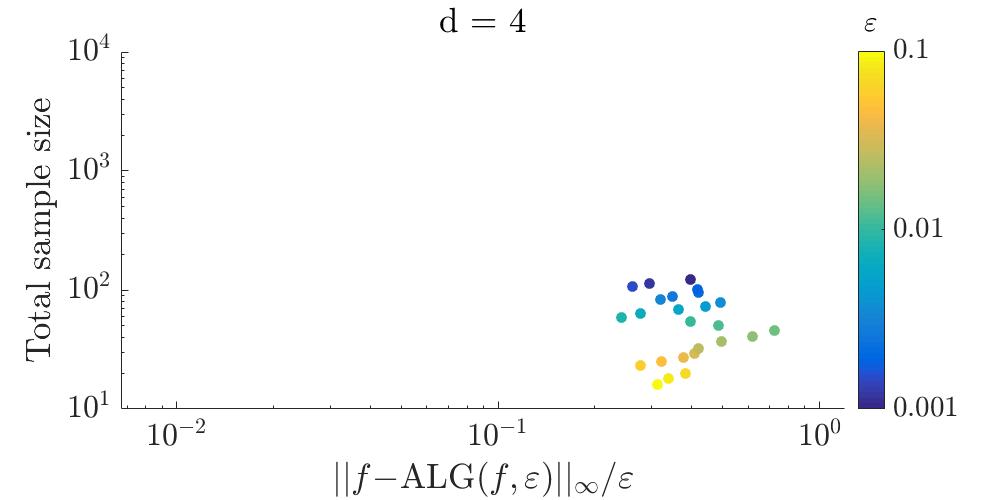}
\label{fig:four1}
\caption{$f$ is a $d=4$-dim. function with random Fourier coefficients.}
\end{subfigure}
\begin{subfigure}{\textwidth}
\centering
\includegraphics[width=0.7\textwidth]{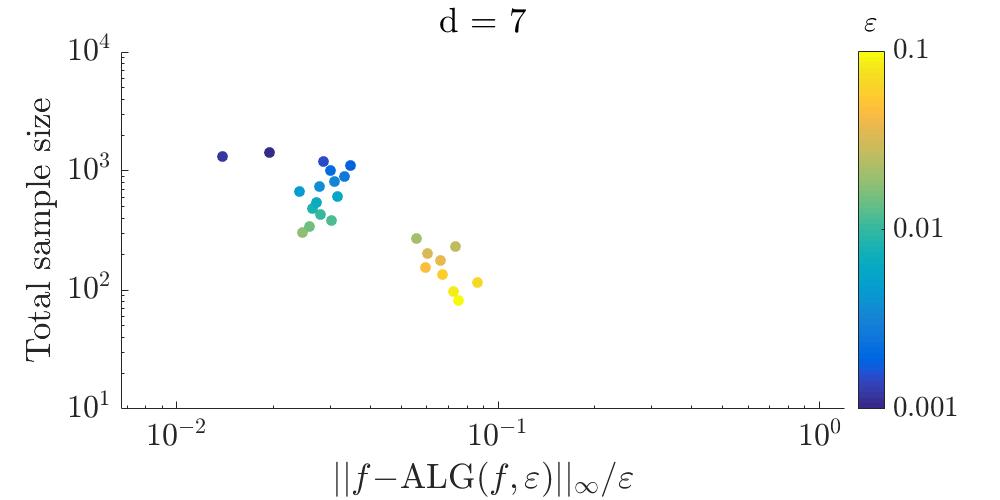}
\label{fig:four2}
\caption{$f$ is a $d=7$-dim. function with random Fourier coefficients.}
\end{subfigure}
\caption{Total required sample size as a function of error ratio $\|f - \ALG(f,\varepsilon)\|_{\infty}/\varepsilon$, with points colored by the absolute error tolerance level $\varepsilon$.}
\label{fig:four}
\end{figure}

\section*{Acknowledgement}
F.~J.~Hickernell, P.~Kritzer, and S.~Mak gratefully acknowledge the support of the Statistical and Applied Mathematical Sciences Institute year-long ``Program on Quasi-Monte Carlo and High-Dimensional Sampling Methods for Applied Mathematics'' through NSF-DMS-1638521.  F.~J.~Hickernell also acknowledges the support of NSF-DMS-152268.
F.~J.~Hickernell and P.~Kritzer thank the RICAM Special Semester Program 2018 for support. P.~Kritzer gratefully acknowledges support by the Austrian Science Fund (FWF) Project  F5506-N26, which is part of the Special Research Program ``Quasi-Monte Carlo Methods: Theory and Applications''.

\bibliographystyle{plain}
\bibliography{ExtraBib.bib,FJH23.bib,FJHown23.bib}

\bigskip

\bigskip

\begin{small}
\noindent\textbf{Authors' addresses:}\\

 \noindent Yuhan Ding\\ 
 Department of Mathematics\\ 
 Misericordia University\\ 
 301 Lake Street, Dallas, PA 18704 USA\\
  
 \medskip 
  
 \noindent Fred J. Hickernell\\
 Department of Applied Mathematics\\ 
 Illinois Institute of Technology\\ 
 RE 220, 10 W.\ 32${}\text{nd}$ Street, Chicago, IL 60616 USA\\

 \medskip 
 
 \noindent Peter Kritzer\\
 Johann Radon Institute for Computational and Applied Mathematics (RICAM)\\
 Austrian Academy of Sciences\\
 Altenbergerstr. 69, 4040 Linz, Austria\\
 
 \medskip
 
 \noindent Simon Mak\\
 H. Milton Stewart School of Industrial and Systems Engineering\\ 
 Georgia Institute of Technology\\ 
 755 Ferst Drive, Atlanta, GA 30332 USA\\

 \end{small}

\end{document}